\documentclass{amsart}

\usepackage{amsmath}
\usepackage{amssymb}
\usepackage{paralist}
\usepackage[english]{babel}
\usepackage[colorlinks=true]{hyperref}
\usepackage{enumitem}
\usepackage{array}
\usepackage[latin1]{inputenc}
\usepackage{xcolor}
\usepackage{esint}
\usepackage{latexsym,amsfonts}
\usepackage{amsthm}
\usepackage{cleveref}
\usepackage{mathtools}
\usepackage{verbatim}
\usepackage{graphics}

\def\vint_#1{\mathchoice
          {\mathop{\kern 0.2em\vrule width 0.6em height 0.69678ex depth -0.58065ex
                  \kern -0.8em \intop}\nolimits_{\kern -0.4em#1}}
          {\mathop{\kern 0.1em\vrule width 0.5em height 0.69678ex depth -0.60387ex
                  \kern -0.6em \intop}\nolimits_{#1}}
          {\mathop{\kern 0.1em\vrule width 0.5em height 0.69678ex
              depth -0.60387ex
                  \kern -0.6em \intop}\nolimits_{#1}}
          {\mathop{\kern 0.1em\vrule width 0.5em height 0.69678ex depth -0.60387ex
                  \kern -0.6em \intop}\nolimits_{#1}}}
\def\vintslides_#1{\mathchoice
          {\mathop{\kern 0.1em\vrule width 0.5em height 0.697ex depth -0.581ex
                  \kern -0.6em \intop}\nolimits_{\kern -0.4em#1}}
          {\mathop{\kern 0.1em\vrule width 0.3em height 0.697ex depth -0.604ex
                  \kern -0.4em \intop}\nolimits_{#1}}
          {\mathop{\kern 0.1em\vrule width 0.3em height 0.697ex depth -0.604ex
                  \kern -0.4em \intop}\nolimits_{#1}}
          {\mathop{\kern 0.1em\vrule width 0.3em height 0.697ex depth -0.604ex
                  \kern -0.4em \intop}\nolimits_{#1}}}

\newcommand{\aveint}[2]{\mathchoice
          {\mathop{\kern 0.2em\vrule width 0.6em height 0.69678ex depth -0.58065ex
                  \kern -0.8em \intop}\nolimits_{\kern -0.45em#1}^{#2}}
          {\mathop{\kern 0.1em\vrule width 0.5em height 0.69678ex depth -0.60387ex
                  \kern -0.6em \intop}\nolimits_{#1}^{#2}}
          {\mathop{\kern 0.1em\vrule width 0.5em height 0.69678ex depth -0.60387ex
                  \kern -0.6em \intop}\nolimits_{#1}^{#2}}
          {\mathop{\kern 0.1em\vrule width 0.5em height 0.69678ex depth -0.60387ex
                  \kern -0.6em \intop}\nolimits_{#1}^{#2}}}

\usepackage{tikz}
\usetikzlibrary{decorations.pathreplacing}
\usepackage{pgfplots}
\usepackage{tikz-3dplot}
\usetikzlibrary{arrows}

% THEOREMS -------------------------------------------------------
\numberwithin{equation}{section}

\newtheorem{theorem}{Theorem}[section]
\newtheorem{lemma}[theorem]{Lemma}
\newtheorem{proposition}[theorem]{Proposition}

\newtheorem*{main-thm}{Main Theorem}
\newtheorem*{main-lemma}{Main Lemma}

\theoremstyle{definition}

\newtheorem{remark}[theorem]{Remark}
\newtheorem{example}[theorem]{Example}

% MATH -----------------------------------------------------------
\newcommand{\abs}[1]{\left\vert#1\right\vert} % Absolute value
 % Norm
\newcommand{\pare}[1]{\left(#1\right)} % Parenthesis
\newcommand{\braces}[1]{\left\{#1\right\}} % Braces
\newcommand{\brackets}[1]{\left[#1\right]} % Brackets

\DeclareMathOperator*{\trace}{Tr}

\newcommand{\R}{\mathbb R}
\newcommand{\N}{\mathbb N}
\newcommand{\B}{\mathbb B}
\newcommand{\A}{\mathcal A}

\newcommand{\eps}{{\varepsilon}}

\renewcommand{\leq}{\leqslant}
\renewcommand{\geq}{\geqslant}

\newcommand{\sqmatrix}[4]{\begin{pmatrix}
				#1 & #2 \\
				#3 & #4
				\end{pmatrix}}

\def\Xint#1{\mathchoice
   {\XXint\displaystyle\textstyle{#1}}
   {\XXint\textstyle\scriptstyle{#1}}
   {\XXint\scriptstyle\scriptscriptstyle{#1}}
   {\XXint\scriptscriptstyle\scriptscriptstyle{#1}}
   \!\int}
\def\XXint#1#2#3{{\setbox0=\hbox{$#1{#2#3}{\int}$}
     \vcenter{\hbox{$#2#3$}}\kern-.5\wd0}}

\def\dashint{\Xint-}

%%%%%%%%%%%%%%%%%%%%%%%%%%%%%%%%%%%%

\begin{document}

%%%%%%%%%%%%%%%%%%%%%%%%%%%%%%%%%%%%

\title[Asymptotic H\"older Regularity for the Ellipsoid Process]{Asymptotic H\"older Regularity for the Ellipsoid Process}

\author[Arroyo]{\'Angel Arroyo}
\address{Department of Mathematics, University of Genoa, Via Dodecaneso 35, 16146 Genova, Italy}
\email{arroyo@dima.unige.it}

\author[Parviainen]{Mikko Parviainen}
\address{Department of Mathematics and Statistics, University of Jyv\"askyl\"a, PO~Box~35, FI-40014 Jyv\"askyl\"a, Finland}
\email{mikko.j.parviainen@jyu.fi}

\date{\today}

\keywords{Dynamic programming principle, local H\"older estimates, stochastic games, coupling of stochastic processes, ellipsoid process, equations in non-divergence form.} 
\subjclass[2010]{35B65, 35J15, 60H30, 60J10, 91A50}

\begin{abstract}
We obtain an asymptotic H\"older estimate for functions satisfying a dynamic programming principle arising from a so-called ellipsoid process. By the ellipsoid process we mean a generalization of the random walk where the next step in the process is taken inside a given space dependent ellipsoid. This stochastic process is related to elliptic equations in non-divergence form with bounded and measurable coefficients, and the regularity estimate is stable as the step size of the process converges to zero. 
The proof, which requires certain control on the distortion and the measure of the ellipsoids but not continuity assumption, is based on the coupling method.
\end{abstract}

\maketitle

%%%%%%%%%%%%%%%%%%%%%%%%%%%%%%%%%%%%

\section{Introduction}

\subsection{Overview}

The Krylov-Safonov \cite{krylovs79} Hölder regularity result is one of the central results in the theory of non-divergence form elliptic partial differential equations with  bounded and measurable coefficients.  The result is not only important on its own right but also gives a flexible tool in the higher regularity and existence theory due to its rather  weak assumptions on the coefficients.

In this paper,  we study a quite general class of what we call ellipsoid processes. Ellipsoid processes are generalizations of the random walk where the next step in the process is taken inside a given space dependent ellipsoid $E_x$, and the value function $u_{\eps}$ satisfies the dynamic programming principle
\begin{align*}
u_\varepsilon(x)
				=
				\dashint_{E_x} u_\varepsilon(y)\ dy,
\end{align*}
as explained in  detail in the next section.
Their role among the discrete processes is somewhat similar  to the role of the linear uniformly elliptic partial differential equations with bounded and measurable coefficients among partial differential equations. Our main result, \Cref{MAIN-THM}, establishes an asymptotic Hölder regularity for a value function of an ellipsoid process under certain assumptions on the distortion and the measure of the ellipsoids but without any continuity assumption.

There is a classical well-known connection between the Brownian motion and the Laplace equation.
It was observed in the paper of Peres, Schramm, Sheffield and Wilson \cite{peresssw09} in discrete time that there is also a connection between the infinity Laplace equation and  a two-player random turn zero-sum game called tug-of-war. Similarly, a connection exists between the $p$-Laplacian, $1<p<\infty$ and different variants of tug-of-war game with noise \cite{peress08, manfredipr12}, as well as between the $1$-Laplacian and a deterministic two-player game \cite{kohns06}.    
Our study of ellipsoid processes is partly motivated by the aim of developing a more general approach that would be widely applicable to problems including the tug-of-war games with noise.  The mean value principle associated to the ellipsoids has also been discussed in the context of PDE theory for example by Pucci and Talenti  in \cite{puccit76}.

A more detailed overview of the proof is given in the next sections, but roughly speaking the proof is based on suitable couplings of probability measures related to the dynamic programming principle at different points and then look at the higher dimensional dynamic programming principle. The underlying idea is coming from the related stochastic processes: if we can show that with high probability the components of the coupled process coincide at some point, this will give a regularity estimate. In the usual random walk it is rather immediate that a good choice is a mirror point coupling of probability measures. However, since our ellipsoids can vary from point to point in a discontinuous fashion, finding good enough couplings becomes a nontrivial task.  At the end of the paper, we illustrate the main points of the proof by explicit examples, and also counterexamples demonstrating the role of the assumptions.

The coupling approach to the regularity of different variants of tug-of-war with noise was first developed in \cite{luirop18}  and applied in \cite{parviainenr16, arroyohp17,arroyolpr,han}.
As it turned out, for the continuous time diffusion processes and the Laplacian, the coupling method was utilized in connection to the regularity already at the beginning of 90's by Cranston \cite{cranston91}, utilizing the tools developed in \cite{lindvallr86}. For more recent works, see for example \cite{kusuoka15, kusuoka17}, which deal with linear PDEs under spatial continuity assumptions on the coefficients.  Actually, in continuous time, the lack of regularity can have some fundamental consequences: Nadirashvili showed in \cite{nadirashvili97} that there is not necessarily a unique diffusion, i.e.\ a unique solution to the martingale problem, nor is there necessarily a unique solution to the corresponding linear PDE with bounded and measurable coefficients. Some of the aspects in the method of coupling are similar to those of Ishii-Lions method \cite{ishiil90} as pointed out in \cite{porrettap13}, but the methods seem to have developed independently.

Let us also point out that the global approach to regularity used for example in  \cite{peress08, manfredipr12, lewickam17, lewicka} seems to be hard to adapt to our situation. This approach is based on coupling the same steps in different points so that the distance between the points is preserved, and continuing close to the boundary of the domain. Alternatively one can use the comparison with translated solutions.  In both cases the translation invariance is used and this is not available for the ellipsoid process.
 
This paper is organized as follows. In \Cref{sec:main-thm}, we make detailed statements. In \Cref{Sec. Preliminaries} we review some basic notions. \Cref{Sec. Comparison} presents the key lemmas in the coupling method. \Cref{Sec. >Nepsilon} is devoted to the proof of our main result in the case $|x-z|\gtrsim\varepsilon$, while \Cref{Sec. <Nepsilon} deals with the remaining case $|x-z|\lesssim\varepsilon$. Finally in \Cref{Sec. Examples} we give some examples showing that the assumptions are really needed in the method used here.

\subsection{Statement of the main result}
\label{sec:main-thm}
Given $0<\lambda\leq\Lambda<\infty$, let us denote by $\A(\lambda,\Lambda)$ the set of all symmetric $n\times n$ real matrices $A$ such that
\begin{equation*}
				\lambda|\xi|^2
				\leq
				\xi^\top\! A\,\xi
				\leq
				\Lambda|\xi|^2
\end{equation*}
for every $\xi\in\R^n$. We say that the matrices in $\A(\lambda,\Lambda)$ are \emph{uniformly elliptic}. We also refer to $\lambda$ and $\Lambda$ as the ellipticity constants of $A$.
\\

It turns out that each symmetric and positive definite matrix $A$ determines the shape and the orientation of an ellipsoid $E_A\subset\R^n$ centered at the origin and given by the formula
\begin{equation*}
				E_A
				:\,=
				\{y\in\R^n \,:\, y^\top\! A^{-1} y<1\}
				=
				A^{1/2}\B,
\end{equation*}
where $\B$ denotes the open unit ball in $\R^n$ and $A^{1/2}$ stands for the \emph{principal square root of $A$}. Moreover, the length of the principal semi-axes of $E_A$ is determined by the square root of the eigenvalues of $A$ (see \Cref{Sec. Preliminaries} for more details), and thus the distortion of $E_A$ coincides with the square root of the quotient between the largest and the smallest eigenvalue of $A$. In particular, for any $A\in\A(\lambda,\Lambda)$, the distortion of $E_A$ is bounded from above by $\sqrt{\Lambda/\lambda}$. Motivated by this, we say that the quotient $\Lambda/\lambda$ is the (maximum) \emph{distortion} of the matrices in $\A(\lambda,\Lambda)$.
\\

Let $\Omega\subset\R^n$ be a domain. We denote by $A:\Omega\to\A(\lambda,\Lambda)$ a matrix-valued function in $\Omega$ with measurable coefficients and values in $\A(\lambda,\Lambda)$. Given $\varepsilon>0$, for each $x\in\Omega$ we can define an ellipsoid $E_x$ centered at $x$ by
\begin{equation}\label{ellipsoid}
				E_x
				:\,=
				E_{\varepsilon,A,x}
				:\,=
				x+\varepsilon A(x)^{1/2}\,\B.
\end{equation}
We also assume that the ellipsoids have the same measure, in other words, $\det\{A(x)^{1/2}\}$ is constant for every $x\in\Omega$. This does not seriously affect the applicability of our result as discussed at the end of \Cref{Sec. Preliminaries}.
\\

For fixed $\varepsilon>0$, in this paper we deal with solutions $u_\eps$ of the \emph{dynamic programming principle} (\emph{DPP}) 
\begin{equation}\label{DPP}
				u_\varepsilon(x)
				=
				\dashint_{E_x} u_\varepsilon(y)\ dy
\end{equation}
for $x\in\Omega$. That is, $u_\varepsilon$ is a function whose value at a point $x\in\Omega$ coincides with its mean value over the ellipsoid $E_x$ (with respect a uniform probability distribution).
\\

Now we are in position of stating our main result, which asserts that, assuming a bound for the distortion of the ellipsoids, the solutions $u_\varepsilon$ to \eqref{DPP} are asymptotically H\"older continuous.

\begin{theorem}\label{MAIN-THM}
Let $\Omega\subset\R^n$ be a domain, $n\geq 2$ and $0<\lambda\leq\Lambda<\infty$ such that
\begin{equation}\label{distortion-bound}
				1
				\leq
				\frac{\Lambda}{\lambda}
				<
				\frac{n+1}{n-1}.
\end{equation}
Suppose  that $A:\Omega\to\A(\lambda,\Lambda)$ is a measurable mapping with constant determinant and $B_{2r}(x_0)\subset\Omega$ for some $r>0$. If $u_\varepsilon$ is a solution to \eqref{DPP}, then there exists some $\alpha=\alpha(n,\lambda,\Lambda)\in(0,1)$ such that
\begin{equation*}
				|u_\varepsilon(x)-u_\varepsilon(z)|
				\leq
				C(|x-z|^\alpha+\varepsilon^\alpha),
\end{equation*}
holds for every $x,z\in B_r(x_0)$ and some constant $C>0$ depending on  $n$, $\lambda$, $\Lambda$, $r$, $\alpha$ and $\sup_{B_{2r}}|u|$,  but independent of $\varepsilon$.
\end{theorem}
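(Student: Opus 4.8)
The plan is to follow the coupling strategy outlined in the introduction: to compare $u_\varepsilon$ at two points $x$ and $z$ we construct a coupled process $(X_k,Z_k)$ starting from $(x,z)$, where at each step $X_{k+1}$ is drawn uniformly from $E_{X_k}$ and $Z_{k+1}$ is drawn uniformly from $E_{Z_k}$, but the joint law is chosen cleverly rather than independently. Using the DPP \eqref{DPP} iteratively, $u_\varepsilon(x)=\mathbb E[u_\varepsilon(X_k)]$ and $u_\varepsilon(z)=\mathbb E[u_\varepsilon(Z_k)]$ for every $k$ (before exiting $B_{2r}(x_0)$), so
\begin{equation*}
				|u_\varepsilon(x)-u_\varepsilon(z)|
				=
				|\mathbb E[u_\varepsilon(X_k)-u_\varepsilon(Z_k)]|
				\leq
				2\sup_{B_{2r}}|u_\varepsilon|\cdot\mathbb P(\tau>k)
				+\text{(exit error)},
\end{equation*}
where $\tau$ is the first time the two components couple, i.e.\ $X_\tau=Z_\tau$ (or the first time one of them leaves the domain). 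Thus the whole problem reduces to showing that the coupling time is, with overwhelming probability, of order $|x-z|^2/\varepsilon^2$ steps, which after rescaling yields the H\"older exponent $\alpha$. The reduction to a hitting-time estimate is standard once the right coupling is in hand; I would organize it so that the case $|x-z|\gtrsim\varepsilon$ and the case $|x-z|\lesssim\varepsilon$ are treated separately, exactly as the section structure of the paper suggests, the latter being essentially a one-step argument since $z$ already lies deep inside $E_x$.

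The heart of the matter is the construction and analysis of the coupling, and this is where the distortion bound \eqref{distortion-bound} must enter. For the ordinary random walk one uses the mirror-point (reflection) coupling across the hyperplane bisecting $[X_k,Z_k]$; here the two ellipsoids $E_{X_k}$ and $E_{Z_k}$ have different shapes and orientations, so a naive reflection does not map one uniform distribution onto the other. I would therefore decompose each uniform measure on $E_{X_k}$ (resp.\ $E_{Z_k}$) into a part that can be matched by a reflection-type map decreasing $|X-Z|$, a part that can be matched by a translation preserving $|X-Z|$, and a small remainder; the key quantitative claim — presumably the content of the lemmas in \Cref{Sec. Comparison} — is that one can arrange the reflecting part to carry enough mass, uniformly in the positions and in the admissible matrices $A$, precisely when $\Lambda/\lambda<(n+1)/(n-1)$. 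This threshold should come from a volume/overlap computation: the mass available for the favourable (distance-decreasing) coupling is controlled by how much of $E_{X_k}$ one can reflect into $E_{Z_k}$, and the mismatch in eccentricity eats into this mass; the borderline $(n+1)/(n-1)$ is the point at which, in the worst-case configuration, there is no longer a guaranteed net drift of $|X_k-Z_k|^2$ toward $0$.

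With the coupling lemma available, the remaining step is a supermartingale / Lyapunov argument: one shows that along the coupled process the quantity $|X_k-Z_k|$ (or a suitable concave function of it, to handle the boundary case and to extract H\"older rather than Lipschitz) is a supermartingale up to the coupling time, with a strictly negative expected increment of the right order whenever $X_k\neq Z_k$. Combining this with an estimate that the process does not exit $B_{2r}(x_0)$ too soon — here the factor-of-two room between $B_r$ and $B_{2r}$ and the boundedness of $u_\varepsilon$ are used, together with a crude second-moment bound on the displacement per step, which is $O(\varepsilon)$ since the ellipsoids have semi-axes between $\varepsilon\sqrt\lambda$ and $\varepsilon\sqrt\Lambda$ — gives the bound $\mathbb P(\tau>k)\lesssim (|x-z|/\varepsilon)^{?}/k^{?}$ or an exponential-type decay that, optimized in $k$, produces $|u_\varepsilon(x)-u_\varepsilon(z)|\leq C(|x-z|^\alpha+\varepsilon^\alpha)$ with $\alpha$ and $C$ of the asserted dependence. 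The main obstacle, and the genuinely novel point, is the coupling construction under only measurability of $A$: because $A$ can jump discontinuously between $X_k$ and $Z_k$, one cannot perturbatively reduce to the case of equal ellipsoids, and one must produce a coupling whose favourable mass is bounded below by a constant depending only on $n,\lambda,\Lambda$ for \emph{every} pair of admissible ellipsoids — it is in making this bound both explicit and tight that the hypothesis \eqref{distortion-bound} is forced, and I expect the examples in \Cref{Sec. Examples} to show that it cannot be dispensed with in this method.
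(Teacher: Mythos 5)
The central gap is that the lemma your whole scheme hinges on --- the existence, for \emph{every} pair $A_1,A_2\in\A(\lambda,\Lambda)$, of a coupling with a favourable drift --- is neither proved nor correctly located, and the heuristic you offer for it would not work. In the main regime $|x-z|>N\sqrt\lambda\,\varepsilon$ the ellipsoids $E_x$ and $E_z$ are disjoint, so no overlap/``reflectable mass'' computation is available: every measure-preserving map matches all of the mass (the ellipsoids have equal volume), and the only question is what the map does to the distance. Moreover, since both marginals are uniform on ellipsoids centered at $X_k$ and $Z_k$, every coupling has centered increments, so conditionally $\mathbb{E}\big[|X_{k+1}-Z_{k+1}|^2\big]=|X_k-Z_k|^2+\mathbb{E}\big[|h_x-h_z|^2\big]\geq|X_k-Z_k|^2$: the squared distance \emph{never} drifts toward $0$, not even for two equal balls under the mirror coupling, so your proposed criterion for the borderline distortion (``no longer a guaranteed net drift of $|X_k-Z_k|^2$ toward $0$'') is vacuous. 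What actually produces the threshold is the strict concavity of $t\mapsto t^\alpha$: one must show that for some orthogonal $Q$ the second-order term \eqref{ineq-Q} is negative, i.e.\ that $\trace\big\{\sqmatrix{\alpha-1}{0}{0}{I}(A_1+A_2-2A_2^{1/2}QA_1^{1/2})\big\}<0$, and optimizing over $O(n)$ via singular values (\Cref{lemma-max-trace}, \Cref{lemma-min-trace}) yields negativity exactly under \eqref{distortion-bound-alpha}, whose $\alpha\to0$ limit is $(n+1)/(n-1)$; the competition is between the transverse second moment, at worst $(n-1)\Lambda$, and the longitudinal one weighted by $\alpha-1$, at best roughly $(n+1)\lambda$. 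The counterexamples in \Cref{Sec. Examples} show that for larger distortion \emph{no} measure-preserving coupling satisfies \eqref{projection-ineq}, so no soft decomposition into a reflecting part, a translating part and a small remainder can recover the result: the quantitative trace computation is unavoidable, and it is missing from your proposal.

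Beyond this, the quantitative skeleton is left as placeholders (the unspecified exponents in your bound for $\mathbb{P}(\tau>k)$), and several discrete-scale issues the paper must handle are not addressed: exact coincidence of the two components has to be engineered through the overlap of $E_x$ and $E_z$ when $|x-z|\leq\frac12\sqrt\lambda\,\varepsilon$, which is precisely where the constant-determinant (equal volume) hypothesis is used and which your sketch never invokes; the intermediate regime $\frac12\sqrt\lambda\,\varepsilon<|x-z|\leq N\sqrt\lambda\,\varepsilon$ needs a separate device because the Taylor expansion of $|x-z|^\alpha$ degenerates there (in the paper, the annular step function $f_2$ together with the coupling \eqref{Qvxvz}); and for merely measurable $A$ a genuinely stochastic implementation would require a measurable selection of couplings, a point the paper sidesteps by never constructing the coupled process: it runs a purely analytic counter-assumption argument with the comparison function $f=f_1-f_2$, deriving \eqref{f < int f} from the DPP and contradicting it via \Cref{lemma f > int f}. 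So while your program mirrors the heuristic narrative of the introduction, as it stands the key lemma is absent and the justification you sketch for it would fail.
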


\subsection{The ellipsoid process and heuristic idea of the proof}\label{Sec. Ellipsoid Process}

The \emph{ellipsoid process} can be seen as a generalization of the random walk. Let us consider a sequence of points $\{x_0,x_1,x_2,\ldots\}\subset\Omega$ describing the location of a particle at each time $j=0,1,2,\ldots$ Steps of the particle are decided according to the following rule: suppose that the particle is placed at some point $x_j\in\Omega$, then the next  particle position $x_{j+1}$ is chosen randomly in the ellipsoid $E_{x_j}$ (according to a uniform probability distribution on $E_{x_j}$). This step is repeated until the particle exits $\Omega$ for the first time at some $x_\tau\notin\Omega$. Then the process stops and the amount $F(x_\tau)$ is collected, where $F$ is a pay-off function defined outside $\Omega$. The one step rule 
above defines a probability measure at every point. These probability measures induce a probability measure on the space of sequences according to the Kolmogorov construction.
 Using this probability measure, the expected pay-off of the ellipsoid process starting from $x_0\in\Omega$ is then
 \begin{equation*}
 				u_{\eps}(x_0)
				:\,=
				\mathbb{E}[F(x_\tau)\,|\,x_0].
 \end{equation*}
Moreover, the process is a Markov chain and by the Markov property the expected pay-off at a point $x$ coincides with the average of the expected pay-offs over the ellipsoid $E_x$, and thus it satisfies the DPP \eqref{DPP}.
\\

Given any solution $u_\varepsilon:\Omega\to\R$ to the original DPP \eqref{DPP}, we perform a change of variables to rewrite the DPP as
\begin{equation}\label{DPP2}
				u_\varepsilon(x)
				=
				\dashint_\B u_\varepsilon(x+\varepsilon A(x)^{1/2}\,y)\ dy.
\end{equation}
Then, we construct a $2n$-dimensional dynamic programming principle by defining  $G:\Omega\times\Omega\to\R$ as 
\begin{equation*}
				G(x,z)
				:\,=
				u_\varepsilon(x)-u_\varepsilon(z)
\end{equation*}
and we get that
\begin{equation}\label{2n-DPP}
				G(x,z)
				=
				\dashint_{\B} G(x+\varepsilon A(x)^{1/2}\, y,z+\varepsilon A(z)^{1/2}\, y)\ dy.
\end{equation}
Now, by means of the function $G$, the problem of the regularity of $u_\varepsilon$ becomes a question about the size of a solution $G$ to \eqref{2n-DPP} in $\Omega\times\Omega$. 

Observe that, due to the invariance of the unit ball $\B$ under orthogonal transformations, performing an orthogonal change of variables in \eqref{DPP2} for $u_\varepsilon(z)$, we see that the $2n$-dimensional DPP \eqref{2n-DPP} is equivalent to
\begin{equation}\label{2n-DPP-PQ}
				G(x,z)
				=
				\dashint_{\B} G(x+\varepsilon A(x)^{1/2} y,z+\varepsilon A(z)^{1/2}Q\, y)\ dy,
\end{equation}
for any orthogonal matrix $Q$. 
Note that the value of the integral with $G$ does not depend on $Q$. However, the choice of this matrix will become relevant in \Cref{Sec. Q}, where we estimate the right-hand side of \eqref{2n-DPP-PQ} for a certain explicit comparison function $f$ and show that it is a  supersolution for \eqref{2n-DPP-PQ} with a strict inequality. We will also show that the solution $G$ satisfies comparison principle with the supersolution $f$. By the explicit structure of $f$, this implies the desired regularity result.  

In stochastic terms, 
the above proof can be described by looking at a $2n$-dimensional stochastic process induced by the ellipsoid process started at $x_0$ and $z_0$, and by their probability measures coupled using the above coupling. The process continues until either one of the particles exits $\Omega$ for the first time (and then we impose the pay-off $2\sup_\Omega|u_\varepsilon|$), or both particles occupy the same position (and we impose the pay-off $0$). 
In this context, the key would be to show that the latter case occurs with a high enough probability.

Let us further illustrate the proof by taking as an example the particular case in which both $E_x$ and $E_z$ are unit balls centered at $x$ and $z$, respectively. In this case the ellipsoid process reduces to a random walk. Then, a suitable choice for the orthogonal matrix $Q$, i.e.\ the coupling, would be the matrix describing the reflection with respect to the $(n-1)$-dimensional hyperplane orthogonal to $x-z$ and passing through $(x+z)/2$.

However, in the general case where we have two different ellipsoids $E_x$ and $E_z$, it is not clear which couplings to choose.
Actually, for ellipsoids with large distortion it may happen that there is no \emph{good enough} coupling that lead us to obtain the sufficient estimates (see the examples in \Cref{Sec. Examples}).

\section{Preliminaries and notation}\label{Sec. Preliminaries}

In this work, $O(n)$ stands for the $n$-dimensional orthogonal group defined as
\begin{equation*}
				O(n)
				:\,=
				\{P\in\R^{n\times n}\,:\,P^\top P=PP^\top=I\},
\end{equation*}
where $P^\top$ stands for the transpose of $P$.

Note that, since $\lambda>0$, then every matrix $A$ in $\A(\lambda,\Lambda)$ is, in particular, positive definite.
This together with the symmetry of $A$ implies that there exists an orthogonal matrix $R\in O(n)$ and a diagonal matrix $D$ (with real and positive elements) such that
\begin{equation*}
				A=RDR^\top.
\end{equation*}
We denote by 
$D^{1/2}$ the square root of $D$, obtained by taking the square root of each element in the diagonal of $D$. In consequence, we define the \emph{principal square root} of a symmetric and positive semidefinite matrix $A$ as
\begin{equation}\label{A=RDR}
				A^{1/2}
				:\,=
				RD^{1/2}R^\top.
\end{equation}

In view of \eqref{A=RDR}, we see that the length of the principal semi-axes of the ellipsoid $E_A$ is given by the square root of the eigenvalues of $A$ (that is the diagonal entries of $D^{1/2}$), while the orientation of those semi-axes is given by the orthogonal matrix $R$.
Hence, by uniform ellipticity, the inclusions
\begin{equation}\label{inclusions}
				B_{\sqrt{\lambda}}\subset E_A \subset B_{\sqrt{\Lambda}}
\end{equation}
hold uniformly for every $A\in\A(\lambda,\Lambda)$. 
Moreover, the measure of the ellipsoid $E_A$ can be easily computed as $|E_A|=|\B| \sqrt{\det\{A\}}$.
\\

As we have defined in \eqref{ellipsoid}, let us fix $\varepsilon>0$ and $A:\Omega\to\A(\lambda,\Lambda)$, and let us explain the connection between the ellipsoids $E_x$, the DPP \eqref{DPP} and the
elliptic equation in non-divergence form
\begin{equation}\label{pde}
				\trace\{A(x) D^2u(x)\}
				=
				\sum_{i,j=1}^na_{ij}(x)\,u_{x_i x_j}(x)
				=
				0,
\end{equation}
where the coefficients $a_{ij}$ are the entries of $A(x)$.
We start by recalling the second order Taylor's expansion of a twice differentiable function $u$ at $x\in\Omega$,
\begin{equation*}
				u(x+y)
				=
				u(x)+\nabla u(x)^\top y+\frac{1}{2}\,y^\top\!D^2u(x)y+o(|y|^2),
\end{equation*}
for every small enough $y\in\R^n$. Here we have used the notation $v^\top w$ for the scalar product of two (column) vectors $v,w\in\R^n$.
Next, we compute the average with respect to $y$ over the ellipsoid $\varepsilon A(x)^{1/2}\B$ of the expansion above. Then the first order term vanishes by the symmetry of $\varepsilon A(x)^{1/2}\B$, that is
\begin{equation*}
				\dashint_{\varepsilon A(x)^{1/2}\B}\nabla u(x)^\top y\ dy
				=
				\nabla u(x)^\top\bigg(\dashint_{\varepsilon A(x)^{1/2}\B}y\ dy\bigg)
				=
				0,
\end{equation*}
while for the second order term we first rewrite it in trace form by recalling the equality $w^\top Mv=\trace\{M\, vw^\top\}$.
Then we need to compute the averaged integral over the ellipsoid of each entry in the matrix $yy^\top$,
\begin{equation*}
\begin{split}
				\dashint_{\varepsilon A(x)^{1/2}\B}yy^\top\ dy
				=
				~&
				\dashint_{\B}(\varepsilon A(x)^{1/2}\, y)(\varepsilon A(x)^{1/2}\, y)^\top\ dy
				\\
				=
				~&
				\varepsilon^2A(x)^{1/2}\bigg(\dashint_{\B}yy^\top\ dy\bigg)A(x)^{1/2}
				=
				\frac{\varepsilon^2}{n+2}\, A(x),
\end{split}
\end{equation*}
where in the first equality a change of variables has been performed and in the last equality we have used that the averaged integral in parenthesis is equal to the identity matrix divided by $n+2$.
Thus, the expansion for the average of $u$ over $E_x$ becomes
\begin{equation*}
				\dashint_{E_x}u(y)\ dy
				=
				u(x)+\frac{\varepsilon^2}{2(n+2)}\,\trace\{A(x) D^2u(x)\}+o(\varepsilon^2).
\end{equation*}
Therefore, we get the following asymptotic mean value property related to \eqref{pde}: let $u\in C^2(\Omega)$, then
\begin{equation*}
				\trace\{A(x) D^2u(x)\}
				=
				0
				\qquad
				\Longleftrightarrow
				\qquad
				u(x)
				=
				\dashint_{E_x}u(y)\ dy+o(\varepsilon^2).
\end{equation*}
However, as already pointed out, the solutions to 
\eqref{pde}
with bounded and measurable coefficients, with no further regularity assumptions, are not necessarily unique.
By a scaling argument we may assume that the determinant of $A$ is a constant function on $\Omega$, and thus the ellipsoids $E_x$ defined in \eqref{ellipsoid} have all the same measure.

\section{The coupling method for asymptotic regularity}\label{Sec. Comparison}

\subsection{The comparison function}

One of the keys in the proof of \Cref{MAIN-THM} is the construction of a suitable comparison function $f$ that is a supervalue for the function $G$ in the $2n$-dimensional DPP \eqref{2n-DPP}, and possesses  the desired regularity properties. 
We construct such a comparison function for the $2n$-dimensional DPP \eqref{2n-DPP} by defining explicitly the function $f_1:\R^{2n}\to\R$ by
\begin{equation}\label{f1}
				f_1(x,z)
				=
				C|x-z|^\alpha+\widetilde C|x+z|^2,
\end{equation}
where $\alpha\in(0,1)$ is a small enough exponent depending on $n$, $\lambda$ and $\Lambda$ (see \eqref{distortion-bound-alpha}),
\begin{equation*}
				\widetilde C
				=
				\frac{2}{3r^2}\,\sup_{B_{2r}}|u|,
\end{equation*}
and
\begin{equation}\label{C1}
				C>\frac{2}{r^\alpha}\,\sup_{B_{2r}}|u|
\end{equation}
is a constant depending on $n$, $\lambda$, $\Lambda$, $r$, $\alpha$ and $\sup_{B_{2r}}|u|$ (see \eqref{C2}, \eqref{C3} and \eqref{C4}). Here and in what follows, we use a shorthand $u:\,=u_\varepsilon$.
 Note that the first term in \eqref{f1} plays the role of the modulus of $\alpha$-H\"older continuity needed for obtaining the regularity result, while the second is just a small correction term introduced in order to ensure that
\begin{equation}\label{ineq-f1}
				|u(x)-u(z)|
				\leq
				f_1(x,z)
				\quad\quad \mbox{ when }\ x,z\in B_{2r}\setminus B_r.
\end{equation}
Indeed, if $x,z\in B_{2r}\setminus B_r$ such that $\abs{x-z}\leq r$, then
\begin{equation*}
				|x+z|^2
				=
				2\abs{x}^2+2\abs{z}^2-\abs{x-z}^2\geq 3r^2
\end{equation*}
and
\begin{equation*}
				|u(x)-u(z)|
				\leq
				2\sup_{B_{2r}}|u|
				=
				3\widetilde Cr^2\leq \widetilde C\abs{x+z}^2\leq f_1(x,z).
\end{equation*}
Otherwise, if $|x-z|>r$, using \eqref{C1}
we get
\begin{equation*}
				|u(x)-u(z)|
				\leq
				2\sup_{B_{2r}}|u|
				<
				Cr^\alpha
				<
				C|x-z|^\alpha
				<
				f_1(x,z).
\end{equation*}

However, due to the discrete nature of the DPP, the solutions $u=u_\varepsilon$ to \eqref{DPP} can present jumps in the small $\varepsilon$-scale. For that reason, we need to introduce an additional term in the comparison function in order to control the behavior of $u$ in this situation. This is, an \emph{annular step} function $f_2$ defined by
\begin{equation}\label{f2}
				f_2(x,z)
				=
				\left\{
				\begin{array}{cl}
				C^{2(2N-i)}\varepsilon^\alpha & \displaystyle \mbox{ if }\ \frac{i-1}{2}<\frac{|x-z|}{\sqrt\lambda\,\varepsilon}\leq \frac{i}{2}, \quad (i=0,1,\ldots,2N),
				\\[10pt]
				0 & \displaystyle \mbox{ if }\ \frac{|x-z|}{\sqrt\lambda\,\varepsilon}>N
				\end{array}
				\right.
\end{equation}
where $N\in\N$ is a large enough constant depending on $\lambda$, $\Lambda$ and $C$ that will be chosen later (see \eqref{N1}, \eqref{N2} and \Cref{Remark 4.2}). Note that $f_2=0$ whenever $|x-z|>N\sqrt\lambda\,\varepsilon$ and that $\sup f_2=C^{4N}\varepsilon^\alpha$ is reached on the set
\begin{equation*}
				S_0
				=
				\{(x,z)\in\R^{2n}\,:\,x=z\}.
\end{equation*}

\subsection{The counter assumption}

We choose 
\begin{equation*}
				f(x,z)
				\,:=
				f_1(x,z)-f_2(x,z).
\end{equation*}
as our comparison function. Since the terms in $f_1$ have been chosen so that \eqref{ineq-f1} holds and $\sup f_2=C^{4N}\varepsilon^\alpha$, then
\begin{equation}\label{ineq u - f}
				|u(x)-u(z)|
				\leq
				f(x,z)+C^{4N}\varepsilon^\alpha
				\quad\quad \mbox{ when }\ x,z\in B_{2r}\setminus B_r.
\end{equation}
In order to check that this estimate is also satisfied inside $B_r$, let us define a constant $K$ measuring the maximum difference between $|u(x)-u(z)|$ and $f(x,z)$ for $x$ and $z$ in $B_r$, i.e.
\begin{equation*}
				K
				:\,=
				\sup_{x,z\in B_r}(u(x)-u(z)-f(x,z)).
\end{equation*}

Our aim is to show that there exist suitable constants $C$ and $N$ so that the inequality \eqref{ineq u - f} also holds in $B_r$, i.e.
\begin{equation}\label{aim}
				|u(x)-u(z)|
				\leq
				f(x,z)+C^{4N}\varepsilon^\alpha
				\quad\quad \mbox{ when }\ x,z\in B_r.
\end{equation}
We proceed by contradiction. Assume that \eqref{aim} does not hold. Then, this implies that
\begin{equation}\label{contra}
				K
				>
				C^{4N}\varepsilon^\alpha. 
\end{equation}
As a direct consequence of the counter assumption, observe that from the definition of $K$ together with \eqref{ineq u - f} it holds that
\begin{equation}\label{ineq K B_2r}
				u(x)-u(z)
				\leq
				f(x,z)+K
				\qquad
				\mbox{ for every }
				x,z\in B_{2r}.
\end{equation}

\subsection{Statement of the key inequalities}

\label{sec:keys}

Let us assume that the counter assumption \eqref{contra} holds. It follows directly from the definition of $K$ that, for any $\eta>0$ to be fixed later, there exist $x_\eta,z_\eta\in B_r$ such that 
\begin{equation}\label{sup1}
				u(x_\eta)-u(z_\eta)-f(x_\eta,z_\eta)\geq K-\eta.
\end{equation}
In order to get a contradiction, we need to distinguish two different cases depending on the distance between the points $x_\eta$ and $z_\eta$ from \eqref{sup1}. The case $|x_\eta-z_\eta|<\frac{1}{2}\,\sqrt\lambda\,\varepsilon$ is simpler due to a cancellation effect that happens when the distance is small. This special case is presented in \Cref{Sec. short distance}. Now we focus on the case in which $|x_\eta-z_\eta|\geq\frac{1}{2}\,\sqrt\lambda\,\varepsilon$.

Our strategy is to utilize first the counter assumption together with \eqref{sup1} in order to obtain an inequality in terms of the comparison function $f$ (see \eqref{f < int f}). Then, by using the explicit form of the comparison function $f$, we choose adequately the constants $C$ and $N$ in such a way that $f$ satisfies the opposite strict inequality (\Cref{lemma f > int f}). This then is the desired contradiction giving the main result.

Starting from \eqref{sup1} and recalling the DPP \eqref{DPP}, we get
\begin{align}
\label{eq:key-contra}
K-\eta
				\leq
				~&
				u(x_\eta)-u(z_\eta)-f(x_\eta,z_\eta) \nonumber
				\\
				=
				~&
				\dashint_\B u(x_\eta+\varepsilon A(x_\eta)^{1/2}\,y)\ dy - \dashint_\B u(z_\eta+\varepsilon A(z_\eta)^{1/2}\,y)\ dy-f(x_\eta,z_\eta)
				\nonumber
				\\
				=
				~&
				\dashint_\B \Big[u(x_\eta+\varepsilon A(x_\eta)^{1/2}\,y) -  u(z_\eta+\varepsilon A(z_\eta)^{1/2}Q\,y)\Big]\ dy-f(x_\eta,z_\eta)
				\\
				\leq
				~&
				K+\dashint_\B f(x_\eta+\varepsilon A(x_\eta)^{1/2}\,y,z_\eta+\varepsilon A(z_\eta)^{1/2}Q\,y)\ dy-f(x_\eta,z_\eta), \nonumber
\end{align}
where in the second equality we have performed the orthogonal change of variables $y\mapsto Qy$ in the second integral, and in the last inequality we have recalled \eqref{ineq K B_2r}. We remark that the counter assumption has been applied here when using \eqref{ineq K B_2r}.

Thus for any $\eta>0$, let $x_\eta,z_\eta\in B_r$ that satisfy \eqref{sup1}, it follows that
\begin{equation}\label{f < int f}
				f(x_\eta,z_\eta)
				\leq
				\dashint_\B f(x_\eta+\varepsilon A(x_\eta)^{1/2}\,y,z_\eta+\varepsilon A(z_\eta)^{1/2}Q\,y)\ dy +\eta,
\end{equation}
where $Q\in O(n)$ is any fixed orthogonal matrix.

Now, the idea is to thrive for a contradiction by showing that the opposite strict inequality for \eqref{f < int f} holds for a certain choice of the coupling matrix $Q$.
As we will see in \Cref{Sec. Q}, this can only be done assuming a bound for the distortion of the matrices in $\A(\lambda,\Lambda)$.

\begin{lemma}\label{lemma f > int f}
Let $f=f_1-f_2$ be the comparison function, where $f_1$ and $f_2$ are the functions defined in \eqref{f1} and \eqref{f2}, respectively. Let $\eta=\eta(\varepsilon)=\varepsilon^2>0$ and assume that  $\Lambda/\lambda$ is bounded as in \eqref{distortion-bound}. Let $x,z\in B_r$ such that $|x-z|>\frac{1}{2}\sqrt\lambda\,\varepsilon$, then there exists an orthogonal matrix $Q\in O(n)$ such that
\begin{equation}\label{f > int f}
				f(x,z)
				>
				\dashint_\B f(x+\varepsilon A(x)^{1/2}\,y,z+\varepsilon A(z)^{1/2}Q\,y)\ dy +\eta
\end{equation}
holds.
\end{lemma}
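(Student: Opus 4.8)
The plan is to prove \Cref{lemma f > int f} by carefully estimating the average of $f = f_1 - f_2$ over the coupled ellipsoids, splitting the analysis according to the two terms of $f_1$ and the step function $f_2$. First I would fix notation: write $x' = x + \varepsilon A(x)^{1/2} y$, $z' = z + \varepsilon A(z)^{1/2} Q y$, so that $x' - z' = (x-z) + \varepsilon\big(A(x)^{1/2} - A(z)^{1/2}Q\big)y$ and $x' + z' = (x+z) + \varepsilon\big(A(x)^{1/2} + A(z)^{1/2}Q\big)y$. The choice of $Q$ is the heart of the matter: we want $Q$ to make the two ellipsoid displacements as close to \emph{mirror images} as possible with respect to the hyperplane orthogonal to $x-z$, so that the $\dashint_\B |x'-z'|^\alpha\,dy$ term contracts. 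Concretely I expect $Q$ to be chosen (as suggested in the heuristics and deferred to \Cref{Sec. Q}) so that $A(z)^{1/2}Q$ best approximates the reflection of $A(x)^{1/2}$ across $(x-z)^\perp$; the distortion bound \eqref{distortion-bound} is precisely what guarantees such a $Q$ produces enough contraction. I would state the existence of this good $Q$ as the input coming from \Cref{Sec. Q} and build the rest of the estimate on it.

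Next I would handle the three contributions separately. For the correction term $\widetilde C|x+z|^2$: since this is a quadratic, its average over the symmetric ball $\B$ produces $\widetilde C|x+z|^2$ plus a term of order $\varepsilon^2$ coming from $\widetilde C\,\varepsilon^2 \dashint_\B |(A(x)^{1/2}+A(z)^{1/2}Q)y|^2\,dy$ (the linear cross term vanishes by symmetry of $\B$); this contributes a controlled $O(\varepsilon^2)$ surplus to the right-hand side that must later be absorbed. For the main H\"older term $C|x-z|^\alpha$: using the good $Q$, one shows $\dashint_\B |x'-z'|^\alpha\,dy \leq |x-z|^\alpha(1 - \delta)$ for some $\delta = \delta(n,\lambda,\Lambda,\alpha) > 0$ when $|x-z| \gtrsim \varepsilon$ — this is the standard concavity-of-$t\mapsto t^\alpha$ plus reflection-coupling computation, where choosing $\alpha$ small enough (as in \eqref{distortion-bound-alpha}) makes the gain strictly positive given \eqref{distortion-bound}. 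The genuinely delicate part is the interaction with $f_2$: one must check that averaging the step function $f_2$ over the coupled ellipsoids does not destroy the gain. Here the geometric series structure $f_2 \sim C^{2(2N-i)}\varepsilon^\alpha$ on the $i$-th annulus is designed so that when the coupled process has positive probability of jumping one annulus inward (towards $S_0$), the expected value of $f_2$ at the next step strictly exceeds $f_2(x,z)$ by a definite multiplicative factor, and $-f_2$ therefore contributes a \emph{negative} surplus that dominates everything else.

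Assembling: from \eqref{f1} and \eqref{f2},
\begin{align*}
				\dashint_\B f(x',z')\,dy - f(x,z)
				&\leq
				-\,C\,\delta\,|x-z|^\alpha
				+ \widetilde C\,\varepsilon^2\,\dashint_\B |(A(x)^{1/2}+A(z)^{1/2}Q)y|^2\,dy
				\\
				&\quad
				- \Big(\dashint_\B f_2(x',z')\,dy - f_2(x,z)\Big).
\end{align*}
In the regime $|x-z| \geq \tfrac12\sqrt\lambda\,\varepsilon$ one splits into two sub-cases. If $|x-z|$ is of order $\varepsilon$ (say $\tfrac12\sqrt\lambda\,\varepsilon \leq |x-z| \leq N\sqrt\lambda\,\varepsilon$, where $f_2$ is active), the $-f_2$ term provides the contradiction: choosing $N$ large (so $C^{4N}$ dominates) and using the geometric-progression jump estimate, the net change is $\leq -c\,\varepsilon^\alpha < -\eta = -\varepsilon^2$ for $\varepsilon$ small. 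If $|x-z| \geq N\sqrt\lambda\,\varepsilon$ (where $f_2 \equiv 0$ near both points, so the $f_2$ terms drop out entirely — one must verify both ellipsoids stay in the $f_2$-free zone using \eqref{inclusions}), then $-C\delta|x-z|^\alpha \leq -C\delta (N\sqrt\lambda\,\varepsilon)^\alpha$ beats the $\widetilde C\,\varepsilon^2$ error and the $+\eta = \varepsilon^2$ term for $\varepsilon$ small, again giving \eqref{f > int f}.

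The main obstacle, as I see it, is the $f_2$ bookkeeping: one has to control $\dashint_\B f_2(x',z')\,dy$ when $(x,z)$ sits near the boundary between two annuli, because then the averaging can spread mass into \emph{two} adjacent annuli with very different $f_2$-values (factors differing by $C^4$), and one must ensure the unfavorable outward-jump contribution is outweighed by the favorable inward-jump contribution. This is exactly why $N$ must be chosen depending on $C$ (and why \Cref{Remark 4.2} is needed), and why the exponents in \eqref{f2} grow geometrically rather than linearly. A secondary technical point is verifying that the good coupling $Q$ from \Cref{Sec. Q} is compatible with \emph{all} the annular cases simultaneously — i.e. that the same $Q$ that contracts $|x-z|^\alpha$ also behaves acceptably for the $f_2$ average — which should follow because $Q$ depends only on $A(x)$, $A(z)$ and the direction $x-z$, not on the annulus index.
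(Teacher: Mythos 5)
Your overall architecture matches the paper's (split into the regime $|x-z|>N\sqrt\lambda\,\varepsilon$ where only $f_1$ matters, and the medium regime where the geometric weights of $f_2$ do the work), but the two places where the actual mathematics lives are either assumed or mis-stated. First, in the large-distance regime you take ``the existence of this good $Q$'' as an input from \Cref{Sec. Q}; but that section is part of the proof of this very lemma, and producing the coupling is the heart of the matter. The paper does it by reducing \eqref{ineq-Q} to minimizing the trace functional \eqref{Q-map} over $O(n)$, computing $\max_{Q\in O(n)}\trace\{MQ\}=\trace\{(M^\top M)^{1/2}\}\geq n|\det M|^{1/n}$ via the singular value decomposition and AM--GM (\Cref{lemma-max-trace}, \Cref{lemma-min-trace}), which is exactly where the bound $\Lambda/\lambda<(n+1)/(n-1)$ (more precisely \eqref{distortion-bound-alpha}) enters; the optimal $Q$ is \emph{not} the mirror reflection you describe, and with the mirror coupling one only gets the weaker condition \eqref{dist-mirror}. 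Second, your key estimate $\dashint_\B|x'-z'|^\alpha\,dy\leq(1-\delta)|x-z|^\alpha$ with $\delta=\delta(n,\lambda,\Lambda,\alpha)$ is false when $|x-z|\gg\varepsilon$: since each displacement is $O(\varepsilon)$, one has $|x'-z'|^\alpha\geq|x-z|^\alpha\bigl(1-O(\varepsilon/|x-z|)\bigr)$, so no $\varepsilon$-independent multiplicative contraction is possible. The true gain is second order, of size $\varepsilon^2|x-z|^{\alpha-2}$ (first-order terms cancel on averaging; see \eqref{f-taylor}--\eqref{integral-taylor}), it competes against errors of the \emph{same} order $\varepsilon^2$ (the $\widetilde C|x+z|^2$ surplus, the Taylor remainder controlled via \eqref{N1}--\eqref{N2}, and $\eta=\varepsilon^2$), and the inequality is closed by taking $C$ large as in \eqref{C2} --- not by taking ``$\varepsilon$ small,'' which is not available since the constants must be $\varepsilon$-independent.

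In the medium regime your mechanism (positive probability of jumping one annulus inward beats the geometric growth of the weights) is the right one, but you never construct the coupling nor prove the uniform lower bound on the inward-jump probability; the paper does this with a \emph{different}, explicit coupling, namely $Q$ satisfying \eqref{Qvxvz} with $\nu_x=A(x)^{-1/2}e_1$, $\nu_z=A(z)^{-1/2}e_1$, and a concrete ball of directions $B_{1/300}(-\tfrac13 e_1)$ that lands inward (\Cref{lemma-ineq-f2}), while $f_1$ is only controlled by the crude \Cref{lemma-estimate-f1}. Your claim that the same trace-minimizing $Q$ ``also behaves acceptably for the $f_2$ average'' is unsubstantiated and unnecessary. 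Two further points: the ``unfavorable outward-jump'' bookkeeping you flag as the main obstacle is a non-issue, because $f_2\geq 0$ lets one simply discard all annuli with index $\geq j$ and keep only the inward mass; and in the large-distance case the verification that the displaced points ``stay in the $f_2$-free zone'' both fails near $|x-z|\approx N\sqrt\lambda\,\varepsilon$ and is not needed, since $-f_2\leq 0$ only helps when bounding $\dashint f$ by $\dashint f_1$.
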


Observe that, choosing $x$ and $z$ so that $x=x_\eta$ and $z=z_\eta$ in \Cref{lemma f > int f}, we get a contradiction between \eqref{f < int f} and \eqref{f > int f}, and this implies the falseness of \eqref{contra}, so \eqref{aim} holds. This, together with the short distance case presented in \Cref{Sec. short distance}, yields the desired asymptotic H\"older regularity estimate and the proof of \Cref{MAIN-THM} is complete.

We can assume without loss of generality that both $x_\eta$ and $z_\eta$ from \eqref{sup1} lie in the first coordinate axis of $\R^n$, i.e.\ $x_\eta,z_\eta\in\mathrm{span}\{e_1\}$. Furthermore, from now on we will assume that
\begin{equation*}
				\frac{x_\eta-z_\eta}{|x_\eta-z_\eta|}
				=
				e_1,
\end{equation*}
where $e_1=(1,0,\ldots,0)^\top$.

We split the proof of \Cref{lemma f > int f} into two cases depending on the size of $|x-z|$ in comparison with the constant $N\sqrt\lambda\,\varepsilon$, using different arguments and distinguishing between the case in which $f_2= 0$ (\Cref{Sec. >Nepsilon}) and the case in which $f_2\neq0$ (\Cref{Sec. <Nepsilon}).

\section{Case $|x_\eta-z_\eta|>N\sqrt\lambda\,\varepsilon$}\label{Sec. >Nepsilon}

In this section, we prove  \Cref{lemma f > int f} in the case $|x-z|>N\sqrt\lambda\,\varepsilon$, and thus, as pointed out in \Cref{sec:keys}, complete the proof of the main result \Cref{MAIN-THM} in this particular case. 

Since $|x-z|>N\sqrt\lambda\,\varepsilon$, by \eqref{f2} we have that $f_2(x,z)=0$. Thus
\begin{equation}\label{f}
				f(x,z)
				=f_1(x,z)
				=
				C|x-z|^\alpha+\widetilde C|x+z|^2.
\end{equation}
Our aim is to show that the inequality \eqref{f > int f} in \Cref{lemma f > int f} holds.
The idea is to use the explicit form of the comparison function in order to estimate the right-hand side of \eqref{f < int f}, and then check that, for a convenient choice of the constants, the estimate is strictly bounded from above by $f(x,z)$.

\subsection{Taylor's expansion for $f$}

In this section we compute the Taylor's expansion of $f_1$ in order to obtain the desired estimates for the comparison function. This is similar to \cite{luirop18,arroyohp17} and \cite{arroyolpr}. However, for the convenience of the reader and expository reasons we write down the details.

\begin{lemma}
Let $f$ be the comparison function \eqref{f} and suppose that $x,z\in B_r$ satisfy $(x-z)/|x-z|=e_1$. Then the inequality
\begin{multline}\label{f-taylor}
				f(x+h_x,z+h_z)-f(x,z)
				\\
\begin{split}
				\leq
				~&
				C\,\alpha|x-z|^{\alpha-1}e_1^\top(h_x-h_z)+2\widetilde C(x+z)^\top(h_x+h_z)
				\\
				~&
				+ \frac{C}{2}\,\alpha|x-z|^{\alpha-2}\trace\left\{\sqmatrix{\alpha-1}{0}{0}{I} (h_x-h_z)(h_x-h_z)^\top\right\}
				\\
				~&
				+ (16\widetilde C\Lambda r^{2-\alpha}+1)\abs{x-z}^{\alpha-2}\,\varepsilon^2
\end{split}
\end{multline}
holds for every $|h_x|,|h_z|\leq\sqrt{\Lambda}\, \varepsilon$.
\end{lemma}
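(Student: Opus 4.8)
The plan is to Taylor-expand the two pieces of $f=f_1$ separately, since $f_2\equiv 0$ in this regime, and to treat the displacements $h_x,h_z\in\sqrt\Lambda\,\varepsilon\,\B$ as small perturbations of size $O(\varepsilon)$. For the smooth quadratic term $\widetilde C|x+z|^2$ the expansion is exact: $\widetilde C|x+z+h_x+h_z|^2-\widetilde C|x+z|^2 = 2\widetilde C(x+z)^\top(h_x+h_z)+\widetilde C|h_x+h_z|^2$, and the last term is controlled by $4\widetilde C\Lambda\varepsilon^2$; to match the form of the claimed bound (with the factor $|x-z|^{\alpha-2}$) one uses $|x-z|>N\sqrt\lambda\,\varepsilon$, or more crudely $|x-z|\le 4r$ since $x,z\in B_r\subset B_{2r}$, to absorb this into $16\widetilde C\Lambda r^{2-\alpha}|x-z|^{\alpha-2}\varepsilon^2$ (using $\alpha-2<0$, so $|x-z|^{\alpha-2}$ is bounded below after dividing, i.e.\ $4\widetilde C\Lambda\varepsilon^2 \le 16\widetilde C\Lambda r^{2-\alpha}|x-z|^{\alpha-2}\varepsilon^2$ whenever $|x-z|\le 4r$; I would double-check the exact constant but this is the mechanism).

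For the H\"older term $C|x-z|^\alpha$ the key is to expand $g(w):=|w|^\alpha$ around $w=x-z$, which by hypothesis equals $|x-z|e_1$, a point bounded away from the origin (by $\tfrac12\sqrt\lambda\,\varepsilon$ in the general lemma, and here by $N\sqrt\lambda\,\varepsilon$). With $w+h$ where $h:=h_x-h_z$, $|h|\le 2\sqrt\Lambda\,\varepsilon$, the second-order Taylor formula with integral remainder gives
\begin{equation*}
				|w+h|^\alpha - |w|^\alpha
				=
				\nabla g(w)^\top h + \tfrac12 h^\top D^2g(w)h + R,
\end{equation*}
where $\nabla g(w) = \alpha|w|^{\alpha-2}w$ and $D^2g(w) = \alpha|w|^{\alpha-2}\bigl(I + (\alpha-2)\tfrac{ww^\top}{|w|^2}\bigr)$. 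At $w=|x-z|e_1$ this Hessian is exactly $\alpha|x-z|^{\alpha-2}\,\mathrm{diag}(\alpha-1,1,\ldots,1)$, which is the matrix $\left(\begin{smallmatrix}\alpha-1 & 0\\ 0 & I\end{smallmatrix}\right)$ appearing in the statement, and $\nabla g(w)^\top h = \alpha|x-z|^{\alpha-1}e_1^\top(h_x-h_z)$; multiplying by $C$ reproduces the first and third lines of \eqref{f-taylor}.

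The main obstacle is the remainder $R$: one needs $|R|\lesssim |x-z|^{\alpha-2}\varepsilon^2$ with a clean constant (the $+1$ in $(16\widetilde C\Lambda r^{2-\alpha}+1)$). The natural route is $R=\tfrac12\int_0^1(1-t)\,h^\top\bigl(D^2g(w+th)-D^2g(w)\bigr)h\,dt$ plus controlling $D^2g$ along the segment; since $\|D^2g(v)\|\le c_\alpha |v|^{\alpha-2}$ and $|w+th|\ge |w| - |h|$, one must ensure the segment stays away from the origin. Here $|h|\le 2\sqrt\Lambda\,\varepsilon$ while $|w|=|x-z|>N\sqrt\lambda\,\varepsilon$, so choosing $N$ large (say $N\sqrt\lambda > 4\sqrt\Lambda$) gives $|w+th|\ge\tfrac12|w|$, hence $\|D^2g(v)\|\le c_\alpha (\tfrac12|x-z|)^{\alpha-2} = 2^{2-\alpha}c_\alpha|x-z|^{\alpha-2}$ on the whole segment, and then $|R|\le 2^{1-\alpha}c_\alpha|x-z|^{\alpha-2}|h|^2 \le C'\Lambda|x-z|^{\alpha-2}\varepsilon^2$; one then absorbs the $C$ in front by noting it is a fixed constant, or more likely the paper carries the $C$ and the exponent $\alpha$ is taken small enough that the pure number $c_\alpha$ stays bounded (e.g.\ $c_\alpha\le 2$ for $\alpha\le 1$). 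A careful bookkeeping of which constant is absorbed where is the only real subtlety; everything else is the standard two-term Taylor expansion of $|w|^\alpha$ at a non-degenerate point, exactly as in \cite{luirop18,arroyohp17,arroyolpr}.
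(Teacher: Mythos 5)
Your treatment of the quadratic term and of the first- and second-order terms of $C|x-z|^\alpha$ matches the paper (the Hessian computation at $w=|x-z|e_1$ and the gradient term are exactly the paper's \eqref{D1}--\eqref{D2}; your ``$|x-z|\le 4r$'' should be $|x-z|<2r$, but the absorption mechanism is the right one). The genuine gap is in your remainder estimate. You bound $R=\tfrac12\int_0^1(1-t)\,h^\top\bigl(D^2g(w+th)-D^2g(w)\bigr)h\,dt$ by controlling only $\|D^2g\|\lesssim |v|^{\alpha-2}$ along the segment, which yields $|R|\le c\,\alpha\,|x-z|^{\alpha-2}|h|^2\lesssim \alpha\Lambda\,|x-z|^{\alpha-2}\varepsilon^2$ --- a \emph{second-order} bound. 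But this remainder enters \eqref{f-taylor} multiplied by the large constant $C$, so your argument produces an error term of size $c\,\alpha C\Lambda\,|x-z|^{\alpha-2}\varepsilon^2$, not the stated $(16\widetilde C\Lambda r^{2-\alpha}+1)|x-z|^{\alpha-2}\varepsilon^2$. Your suggested fixes do not close this: ``$c_\alpha\le 2$'' controls only the pure number, not the factor $C$, and $C$ cannot be treated as harmless because in \Cref{key-lemma-1} the error must be beaten by the good term $-\tau C\,|x-z|^{\alpha-2}\varepsilon^2$, whose coefficient is also linear in $C$; with your bound the required negativity would force $\tau\gtrsim \alpha\Lambda$, which fails in general (e.g.\ when the distortion is close to the threshold and $\tau$ is tiny), and no choice of $C$ in \eqref{C2} can repair an error that scales like $C$ itself.

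The paper avoids this by exploiting that the remainder is genuinely \emph{third} order: using $\partial_t^3 t^\alpha=\alpha(1-\alpha)(2-\alpha)t^{\alpha-3}$ and $|x-z|>N\sqrt\lambda\,\varepsilon\ge 4\sqrt\Lambda\,\varepsilon$ (condition \eqref{N1}), it gets $\mathcal E\lesssim \Lambda^{3/2}\,\frac{\varepsilon}{|x-z|}\,|x-z|^{\alpha-2}\varepsilon^2$, i.e.\ an extra smallness factor $\varepsilon/|x-z|\le 1/(N\sqrt\lambda)$; then the choice $N>2^{7/2}\Lambda\sqrt{\Lambda/\lambda}\,C$ in \eqref{N2} absorbs the prefactor $C$ and yields exactly the ``$+1$'' in \eqref{f-taylor}. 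Your integral-remainder setup can be rescued with one change: bound $\|D^2g(w+th)-D^2g(w)\|$ by the Lipschitz constant of the Hessian along the segment (i.e.\ by $\sup\|D^3g\|\cdot|h|\lesssim \alpha\,(\tfrac12|x-z|)^{\alpha-3}|h|$), which restores the cubic bound and the $1/N$ factor; without that step, and without the $N$-versus-$C$ interplay of \eqref{N1}--\eqref{N2}, the lemma as stated (and the downstream contradiction) does not follow.
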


\begin{proof}
First observe that,
\begin{multline}\label{taylor-M-part}
				|(x+h_x)+(z+h_z)|^2-|x+z|^2
				\\
\begin{split}
				=
				~&
				2(x+z)^\top(h_x+h_z)+|h_x+h_z|^2
				\\
				\leq
				~&
				2(x+z)^\top(h_x+h_z)+4\Lambda\,\varepsilon^2
				\\
				\leq
				~&
				2(x+z)^\top(h_x+h_z)+16\Lambda r^{2-\alpha}|x-z|^{\alpha-2}\varepsilon^2
\end{split}
\end{multline}
for every $|h_x|,|h_z|<\sqrt\Lambda\,\varepsilon$, where in the last inequality we have used that $|x-z|\leq 2r$ and $\alpha-2<0$.
Next, let us recall the second order Taylor's expansion of a $2n$-dimensional function $\phi(x,z)$,
\begin{multline*}
				\phi(x+h_x,z+h_z)-\phi(x,z)
				\\
				=
				D\phi(x,z)^\top
				\begin{pmatrix}
				h_x \\
				h_z
				\end{pmatrix}
				+\frac{1}{2}\begin{pmatrix}
				h_x^\top \ \
				h_z^\top
				\end{pmatrix}
				D^2\phi(x,z)
				\begin{pmatrix}
				h_x \\
				h_z
				\end{pmatrix}
				+\mathcal{E}_{x,z}(h_x,h_z),
\end{multline*}
where $\mathcal{E}_{x,z}(h_x,h_z)$ is the error term in the Taylor's expansion.
Developing the terms in the expansion, we equivalently have that
\begin{multline}\label{2n-taylor}
				\phi(x+h_x,z+h_z)-\phi(x,z)
				\\
				=
				D_x\phi(x,z)^\top h_x +  D_z\phi(x,z)^\top h_z
				\\
				+\frac{1}{2}\trace\left\{
				 D_{xx}\phi(x,z) h_xh_x^\top +  D_{zz}\phi(x,z) h_zh_z^\top + 2  D_{xz}\phi(x,z)h_zh_x^\top
				\right\}
				\\
				+\mathcal{E}_{x,z}(h_x,h_z).
\end{multline}
We use the formulas
\begin{equation*}
				D_x|x|
				=
				\frac{x}{|x|}
				\qquad \mbox{ and } \qquad
				D_{xx}|x|
				=
				\frac{1}{|x|}\bigg(I-\frac{xx^\top}{|x|^2}\bigg),
\end{equation*}
which, since $(x-z)/|x-z|=e_1$ by assumption, give us that
\begin{equation*}
				D_x|x-z|
				=
				e_1
				\quad \mbox{ and } \quad
				D_{xx}|x-z|
				=
				-D_{xz}|x-z|
				=
				\frac{1}{|x-z|}(I-e_1e_1^\top).
\end{equation*}
Thus, differentiating $|x-z|^\alpha$ with respect to $x$ and $z$ we get
\begin{equation}\label{D1}
				D_x|x-z|^\alpha=\alpha|x-z|^{\alpha-1}e_1,
				\qquad
				D_z|x-z|^\alpha=-\alpha|x-z|^{\alpha-1}e_1
\end{equation}
and
\begin{equation}\label{D2}
\begin{split}
				D_{xx}|x-z|^\alpha
				=
				~&
				D_{zz}|x-z|^\alpha
				=
				-D_{xz}|x-z|^\alpha
				\\
				=
				~&
				\alpha|x-z|^{\alpha-2}\big[-(1-\alpha)e_1e_1^\top+(I-e_1e_1^\top)\big]
				\\
				=
				~&
				\alpha|x-z|^{\alpha-2}\sqmatrix{\alpha-1}{0}{0}{I_{n-1}}.
\end{split}
\end{equation}
Next we estimate the error term. Since
\begin{equation*}
				\frac{\partial^3}{\partial t^3}\, t^\alpha
				=
				\alpha(1-\alpha)(2-\alpha)t^{\alpha-3},
\end{equation*}
by Taylor's theorem, whenever $|x-z|>2\sqrt\Lambda\,\varepsilon$, then
\begin{equation*}
				\mathcal{E}_{x,z}(h_x,h_z)
				\leq
				\frac{\alpha(1-\alpha)(2-\alpha)}{6}
				\abs{\begin{pmatrix}
				h_x \\
				h_z
				\end{pmatrix}}^3
				(|x-z|-2\sqrt\Lambda\,\varepsilon)^{\alpha-3},
\end{equation*}
holds for every $\abs{h_x},\abs{h_z}\leq\sqrt{\Lambda}\,\varepsilon$. Fix
\begin{equation}\label{N1}
				N
				\geq
				4\sqrt\frac\Lambda\lambda,
\end{equation}
then by hypothesis $|x-z|>N\sqrt\lambda\,\varepsilon\geq 4\sqrt\Lambda\,\varepsilon$, and since $0<\alpha<1$, we can estimate
\begin{equation*}
				(|x-z|-2\sqrt\Lambda\,\varepsilon)^{\alpha-3}
				\leq
				\pare{\frac{\abs{x-z}}{2}}^{\alpha-3}.
\end{equation*}
Inserting this in the estimate for the error term together with $|h_x|,|h_z|\leq\sqrt\Lambda\,\varepsilon$ we obtain
\begin{equation}\label{error-term}
\begin{split}
				\mathcal{E}_{x,z}(h_x,h_z)
				&
				\leq
				\frac{1}{3}(2\Lambda\varepsilon^2)^{3/2}\pare{\frac{\abs{x-z}}{2}}^{\alpha-3}
				\\
				&
				=
				\frac{2^{9/2-\alpha}}{3}\,\Lambda^{3/2}\,\frac{\varepsilon}{|x-z|}\,|x-z|^{\alpha-2}\,\varepsilon^2
				\\
				&
				<
				2^{7/2}\,\Lambda^{3/2}\,\frac{\varepsilon}{|x-z|}\,|x-z|^{\alpha-2}\,\varepsilon^2
				\\
				&
				<
				\frac{2^{7/2}\Lambda}{N}\sqrt{\frac{\Lambda}{\lambda}}\,|x-z|^{\alpha-2}\,\varepsilon^2,
\end{split}
\end{equation}
where in the last inequality we have used $|x-z|>N\sqrt\lambda\,\varepsilon$ again.
Plugging \eqref{D1}, \eqref{D2} and \eqref{error-term} into the terms in \eqref{2n-taylor} with $\phi(x,z)=|x-z|^\alpha$ we obtain
\begin{multline*}
				|(x+h_x)-(z+h_z)|^\alpha-|x-z|^\alpha
				\\
\begin{split}
				=
				~&
				\alpha|x-z|^{\alpha-1}e_1^\top(h_x-h_z) 
				\\
				~&
				+
				\frac{1}{2}\,\alpha|x-z|^{\alpha-2}\trace\bigg\{\sqmatrix{\alpha-1}{0}{0}{I}(h_x-h_z)(h_x-h_z)^\top\bigg\}
				\\
				~&
				+\frac{2^{7/2}\Lambda}{N}\sqrt{\frac{\Lambda}{\lambda}}\,|x-z|^{\alpha-2},
\end{split}
\end{multline*}
Thus combining this and \eqref{taylor-M-part} as in \eqref{f} and choosing a large enough natural number $N\in\N$ such that
\begin{equation}\label{N2}
				N>2^{7/2}\Lambda\sqrt{\frac{\Lambda}{\lambda}}\,C,
\end{equation}
we get \eqref{f-taylor}.
\end{proof}

\begin{remark}\label{Remark 4.2}
We could choose $N\in\N$ taking into account the fact that, as we will see later in \Cref{key-lemma-1}, the distortion needs to be bounded by certain constant depending on $n$ and $\alpha$ which is less than $3$. Then, it is enough to choose $N\geq 4\sqrt3$ and $N\geq \sqrt{2^{7}\cdot 3}\,\Lambda C$ instead of \eqref{N1} and \eqref{N2}, respectively.
\end{remark}

The desired estimate then follows after averaging the inequality \eqref{f-taylor} from the previous Lemma.

\begin{lemma}\label{expansion-lemma}
Let $f$ be the comparison function \eqref{f} and suppose that $x,z\in B_r$ satisfy $(x-z)/|x-z|=e_1$. Then the inequality
\begin{multline}\label{integral-taylor}
				\dashint_\B f\big(x+\varepsilon A(x)^{1/2}\,y,z+\varepsilon A(z)^{1/2}Q\,y\big)\ dy-f(x,z)
				\\
				\leq
				|x-z|^{\alpha-2}\bigg[
				\frac{C\alpha}{2}\,
				T_\alpha(A(x),A(z),Q)
				+16\widetilde C\Lambda r^{2-\alpha}+1\,\bigg]\,\varepsilon^2
\end{multline}
holds for any fixed orthogonal matrix $Q\in O(n)$, where
\begin{multline*}
				T_\alpha(A_1,A_2,Q)
				\\
				:\,=
				\dashint_\B\trace\left\{\sqmatrix{\alpha-1}{0}{0}{I} \Big(\big(A_1^{1/2}-A_2^{1/2}Q\big)y\Big)\Big(\big(A_1^{1/2}-A_2^{1/2}Q\big)y\Big)^{\top}\right\}\ dy.
\end{multline*}
\end{lemma}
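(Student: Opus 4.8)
The plan is to feed the increments $h_x=\varepsilon A(x)^{1/2}y$ and $h_z=\varepsilon A(z)^{1/2}Q\,y$ into the pointwise estimate \eqref{f-taylor} and then integrate over $y\in\B$ term by term. Before doing so, I would verify that these increments are admissible for \eqref{f-taylor}: by the inclusion $E_A\subset B_{\sqrt\Lambda}$ recorded in \eqref{inclusions} we have $|A(x)^{1/2}y|\leq\sqrt\Lambda\,|y|\leq\sqrt\Lambda$ for $y\in\B$, and since $Q\in O(n)$ is orthogonal, $|A(z)^{1/2}Q\,y|\leq\sqrt\Lambda$ as well; hence $|h_x|,|h_z|\leq\sqrt\Lambda\,\varepsilon$, which is exactly the range of validity of \eqref{f-taylor}. (Here we stay in the regime $|x-z|>N\sqrt\lambda\,\varepsilon$ of this section, which is what makes \eqref{f-taylor} available.)

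The averaging itself splits into three pieces. The two first-order contributions $C\alpha|x-z|^{\alpha-1}e_1^\top(h_x-h_z)$ and $2\widetilde C(x+z)^\top(h_x+h_z)$ are linear in $y$, and since $\dashint_\B y\,dy=0$ by the symmetry of the unit ball, both integrate to zero. For the second-order term I would write $h_x-h_z=\varepsilon\bigl(A(x)^{1/2}-A(z)^{1/2}Q\bigr)y$, so that $(h_x-h_z)(h_x-h_z)^\top=\varepsilon^2\bigl(A(x)^{1/2}-A(z)^{1/2}Q\bigr)yy^\top\bigl(A(x)^{1/2}-A(z)^{1/2}Q\bigr)^\top$; pulling $\varepsilon^2$ and the trace out of the integral and comparing with the definition of $T_\alpha$ gives
\[
				\dashint_\B\trace\left\{\sqmatrix{\alpha-1}{0}{0}{I}(h_x-h_z)(h_x-h_z)^\top\right\}dy
				=\varepsilon^2\,T_\alpha(A(x),A(z),Q).
\]
The final term of \eqref{f-taylor} is independent of $y$ and therefore survives the average unchanged. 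Recombining the three pieces, the factor $\frac C2\alpha|x-z|^{\alpha-2}$ multiplying the second-order term and the factor $(16\widetilde C\Lambda r^{2-\alpha}+1)|x-z|^{\alpha-2}$ multiplying the last term assemble into the bracketed expression of \eqref{integral-taylor} times $|x-z|^{\alpha-2}\varepsilon^2$, which is the claim.

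I do not expect any real obstacle in this lemma: it is a direct term-by-term integration of the already-established expansion \eqref{f-taylor}. The only items worth being careful about are the admissibility check for $h_x,h_z$ via \eqref{inclusions} and the bookkeeping that identifies the averaged second-order trace with $T_\alpha(A(x),A(z),Q)$. The genuine difficulty of the overall argument is postponed: it lies in showing that, under the distortion bound \eqref{distortion-bound}, the quantity $T_\alpha(A(x),A(z),Q)$ can be made sufficiently negative by choosing the coupling matrix $Q$ well, which is taken up in \Cref{Sec. Q}.
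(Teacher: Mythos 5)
Your proposal is correct and follows essentially the same route as the paper's proof: substitute $h_x=\varepsilon A(x)^{1/2}y$, $h_z=\varepsilon A(z)^{1/2}Q\,y$ into \eqref{f-taylor}, note the first-order terms vanish upon averaging by the symmetry of $\B$, and identify the averaged second-order trace with $\varepsilon^2\,T_\alpha(A(x),A(z),Q)$. Your additional checks (admissibility of $h_x,h_z$ via \eqref{inclusions} and the orthogonality of $Q$) are correct bookkeeping that the paper leaves implicit.
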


\begin{proof}
Replace $h_x=\varepsilon A(x)^{1/2}\,y$ and $h_z=\varepsilon A(z)^{1/2}Q\,y$ in \eqref{f-taylor} and note that, by symmetry, averaging over $\B$, the first order terms vanish and we get \eqref{integral-taylor}.
\end{proof}

\subsection{The optimal orthogonal coupling}\label{Sec. Q}

In view of \Cref{expansion-lemma}, we need to ensure first the negativity of
the right-hand side of \eqref{integral-taylor}
in order to show that \eqref{f > int f} holds. For that reason, in this subsection, our aim is to check that for every pair of matrices $A_1,A_2\in\A(\lambda,\Lambda)$ there exists an orthogonal matrix $Q\in O(n)$ (depending on $A_1$, $A_2$ and $\alpha$) so that
\begin{equation}\label{ineq-Q}
				\dashint_\B\trace\left\{\sqmatrix{\alpha-1}{0}{0}{I} \Big(\big(A_1^{1/2}-A_2^{1/2}Q\big)y\Big)\Big(\big(A_1^{1/2}-A_2^{1/2}Q\big)y\Big)^{\top}\right\}\ dy
				<
				0
\end{equation}
holds.

This is equivalent to the question of which is the best coupling matrix in \eqref{2n-DPP-PQ}. Heuristically, since $\alpha-1<0$, the idea is to choose the orthogonal matrix $Q$ such that the difference (in average) between $A_1^{1/2}y$ and $A_2^{1/2}Qy$ projected over the first component is much larger than projected over the orthogonal subspace $\{e_1\}^\bot$ (see \Cref{fig. mirror point}).
\begin{figure}[h]
\begin{tikzpicture}[scale=1]
\draw (0,-1.5)--(0,1.5);
\draw (-3.5,0)--(3.5,0);
\filldraw (-2,0) circle (1pt);
\filldraw (2,0) circle (1pt);
\draw [thick, rotate around={30:(-2,0)}] (-2,0) ellipse (1.1547 and 0.7071);
\draw [thick, rotate around={-45:(2,0)}] (2,0) ellipse (1 and 0.81649);
\draw (-2,1.4142) node {{\small{$x+\varepsilon A(x)^{1/2}y$}}};
\draw (2,1.4142) node {{\small{$z+\varepsilon A(z)^{1/2}Qy$}}};
\draw (-2-1.4142,-1.4142) node [above] {$E_x$};
\draw (2+1.4142,-1.4142) node [above] {$E_z$};
\draw [->,thick, rotate around={0:(-2,0)}] (-2,0) -- (-2+1,1/3);
\draw [->,thick, rotate around={0:(2,0)}] (2,0) -- (2-0.85537,0.26560);
\end{tikzpicture}
\caption{Illustration of the coupling aiming at negativity of the left-hand side in \eqref{ineq-Q}.}
\label{fig. mirror point}
\end{figure}
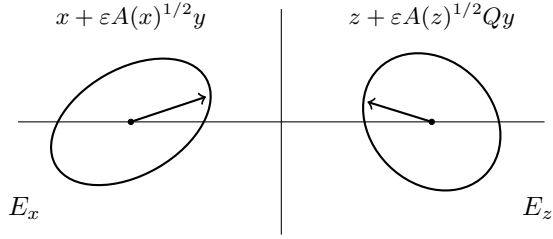

We start by rewriting \eqref{ineq-Q} in a more suitable form. By the linearity of the trace, we can integrate term by term inside the trace so we get
\begin{multline}\label{trace-formula}
				\dashint_\B\trace\left\{\sqmatrix{\alpha-1}{0}{0}{I} \Big(\big(A_1^{1/2}-A_2^{1/2}Q\big)y\Big)\Big(\big(A_1^{1/2}-A_2^{1/2}Q\big)y\Big)^\top\right\}\ dy
				\\
\begin{split}
				=
				~&
				\trace\left\{\sqmatrix{\alpha-1}{0}{0}{I}(A_1^{1/2}-A_2^{1/2}Q)\bigg(\dashint_\B\,yy^\top\ dy\bigg)(A_1^{1/2}-A_2^{1/2}Q)^\top\right\}
				\\
				=
				~&
				\frac{1}{n+2}\,
				\trace\left\{\sqmatrix{\alpha-1}{0}{0}{I}(A_1^{1/2}-A_2^{1/2}Q)(A_1^{1/2}-A_2^{1/2}Q)^\top\right\}
				\\
				=
				~&
				\frac{1}{n+2}\,
				\trace\left\{\sqmatrix{\alpha-1}{0}{0}{I}(A_1+A_2-2A_2^{1/2}QA_1^{1/2})\right\},
\end{split}
\end{multline}
where in the second equality we have used that
\begin{equation*}
				\dashint_\B yy^\top\ dy = \frac{1}{n+2}\, I.
\end{equation*}
Then, \eqref{ineq-Q} holds if and only if
\begin{equation*}
				\trace\left\{\sqmatrix{\alpha-1}{0}{0}{I}(A_1+A_2-2A_2^{1/2}QA_1^{1/2})\right\}
				<
				0.
\end{equation*} 
In order to accomplish this, our strategy is to find a matrix in $O(n)$ minimizing the map
\begin{equation}\label{Q-map}
				Q\longmapsto\trace\left\{\sqmatrix{\alpha-1}{0}{0}{I}(A_1+A_2-2A_2^{1/2}QA_1^{1/2})\right\},
\end{equation}
where $Q$ ranges the orthogonal group $O(n)$.
For that reason, first we remark that the above map is continuous and defined on the compact space $O(n)$. Thus, there exists an orthogonal matrix, say $Q_0$, depending on $A_1$ and $A_2$ such that
\begin{equation*}
				\trace\left\{\sqmatrix{\alpha-1}{0}{0}{I}A_2^{1/2}Q_0A_1^{1/2}\right\}
				\geq
				\trace\left\{\sqmatrix{\alpha-1}{0}{0}{I}A_2^{1/2}QA_1^{1/2}\right\}
\end{equation*}
for every $Q\in O(n)$. The matrix $Q_0$ represents the best possible coupling in \eqref{ineq-Q}. Then, the following step will be to impose the negativity of the resulting term as a sufficient condition for \eqref{f > int f}. As a consequence of this, it will be made clear the need of imposing a bound for the distortion of the matrices $A_1$ and $A_2$.
\\

We start by showing the following lemma, which is stated for any square matrix $M$ and it is a result of optimization over the orthogonal group by using the singular value decomposition of matrices.

\begin{lemma}\label{lemma-max-trace}
Let $n\geq 2$ and $M\in\R^{n\times n}$. Then $M^\top\!M$ is symmetric and positive semidefinite and satisfies
\begin{equation}\label{max-trace}
				\max_{Q\in O(n)}\trace\{MQ\}
				=
				\trace\{(M^\top\! M)^{1/2}\}
				\geq
				n\left|\det\{M\}\right|^{1/n}.
\end{equation}
\end{lemma}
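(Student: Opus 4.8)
The plan is to use the singular value decomposition (SVD) of $M$ together with a classical maximization fact about orthogonal matrices, and then close with the AM--GM inequality for the singular values. First, I would write $M = U\Sigma V^\top$ with $U,V\in O(n)$ and $\Sigma = \mathrm{diag}(\sigma_1,\dots,\sigma_n)$, $\sigma_i\geq 0$ the singular values of $M$. Since $M^\top M = V\Sigma^2 V^\top$, this matrix is manifestly symmetric and positive semidefinite, and its principal square root is $(M^\top M)^{1/2} = V\Sigma V^\top$ in the notation of \eqref{A=RDR}; in particular $\trace\{(M^\top M)^{1/2}\} = \sum_{i=1}^n \sigma_i$.

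For the maximization, for any $Q\in O(n)$ I would write $\trace\{MQ\} = \trace\{U\Sigma V^\top Q\} = \trace\{\Sigma (V^\top Q U)\} = \trace\{\Sigma P\}$, where $P := V^\top Q U \in O(n)$, and note that as $Q$ ranges over $O(n)$ so does $P$. Hence $\max_{Q}\trace\{MQ\} = \max_{P\in O(n)}\trace\{\Sigma P\} = \max_{P\in O(n)}\sum_{i=1}^n \sigma_i P_{ii}$. Since $P$ is orthogonal, each column (and row) is a unit vector, so $|P_{ii}|\leq 1$ for every $i$; therefore $\sum_i \sigma_i P_{ii}\leq \sum_i\sigma_i$, and equality is attained at $P=I$ (i.e.\ $Q = V U^\top$). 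This gives $\max_{Q\in O(n)}\trace\{MQ\} = \sum_{i=1}^n\sigma_i = \trace\{(M^\top M)^{1/2}\}$.

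Finally, the lower bound follows from the arithmetic--geometric mean inequality applied to the nonnegative numbers $\sigma_1,\dots,\sigma_n$:
\begin{equation*}
				\frac{1}{n}\sum_{i=1}^n\sigma_i
				\geq
				\Big(\prod_{i=1}^n\sigma_i\Big)^{1/n}
				=
				\left|\det\{M\}\right|^{1/n},
\end{equation*}
since $\det\{M^\top M\} = \prod_i\sigma_i^2$ gives $\prod_i\sigma_i = |\det\{M\}|$. Multiplying by $n$ yields \eqref{max-trace}. There is no real obstacle here; the only point requiring a little care is the bookkeeping showing that $P = V^\top Q U$ runs over all of $O(n)$ (which is immediate since $U,V$ are fixed orthogonal matrices and left/right multiplication by orthogonal matrices is a bijection of $O(n)$), and the observation that the diagonal entries of an orthogonal matrix are bounded by $1$ in absolute value. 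If one prefers to avoid SVD entirely, an alternative is to use the polar decomposition $M = (M M^\top)^{1/2}\,W$ with $W\in O(n)$ together with the fact that $\trace\{S Q'\}\leq\trace\{S\}$ for symmetric positive semidefinite $S$ and $Q'\in O(n)$ (diagonalize $S$ and repeat the bound $|Q'_{ii}|\leq 1$), but the SVD route keeps the singular values explicit, which is convenient for the $\det$ bound.
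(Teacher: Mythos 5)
Your proposal is correct and follows essentially the same route as the paper: a singular value decomposition of $M$, reduction of the maximization over $O(n)$ to the diagonal case where the maximum $\trace\{\Sigma P\}$ is attained at $P=I$ (you make explicit the reason, namely $|P_{ii}|\leq 1$ for orthogonal $P$, which the paper leaves implicit), and then the AM--GM inequality for the singular values together with $\prod_i\sigma_i=|\det\{M\}|$.
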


\begin{proof} 
By the singular value decomposition, any square matrix $M$ can be factorized as $M=R_1S^{1/2}R_2^\top$, where $R_1,R_2\in O(n)$ and $S$ is a diagonal matrix containing the eigenvalues of $M^\top\! M$, which are real and non-negative since $M^\top\! M$ is symmetric and a positive semidefinite. Then, for any $Q\in O(n)$,
\begin{equation*}
				\trace\{MQ\}
				=
				\trace\{R_1S^{1/2}R_2^\top Q\}
				=
				\trace\{S^{1/2}R_2^\top QR_1\},
\end{equation*}
where $\trace\{AB\}=\trace\{BA\}$ has been used in the second equality. Since $O(n)$ together with the usual matrix multiplication has group structure, then we can select $\widetilde Q=R_2^\top QR_1\in O(n)$ and thus 
\begin{equation*}
				\max_{Q\in O(n)}\trace\{MQ\}
				=
				\max_{\widetilde Q\in O(n)}\trace\{S^{1/2}\widetilde Q\}.
\end{equation*}
Observe that, since $S^{1/2}$ is a diagonal matrix with non-negative entries, the maximum in the right-hand side of the previous equations is attained for $\widetilde Q=I$, i.e.
\begin{equation*}
				\max_{Q\in O(n)}\trace\{S^{1/2}Q\}
				=
				\trace\{S^{1/2}\}
				=
				\trace\{(M^\top\! M)^{1/2}\}.
\end{equation*}
Therefore, we derive the equality in \eqref{max-trace}.
In order to show the inequality in \eqref{max-trace}, let us write  $\widetilde M=(M^\top\! M)^{1/2}$ and recall the following inequality
\begin{equation*}
				\trace\{\widetilde M\}
				\geq
				n(\det\{\widetilde M\})^{1/n},
\end{equation*}
which follows from the inequality of arithmetic and geometric means and the fact that all the eigenvalues of $\widetilde M$ are real and non-negative. Then the proof is completed by observing that $\det\{\widetilde M\}=\left|\det\{M\}\right|$.
\end{proof}

\begin{remark}\label{optimal-orthogonal-coupling}
Out of curiosity, observe that if $Q\in O(n)$ and $M\in\R^{n\times n}$, the distance between $Q$ and $M^\top$ is then given by
\begin{equation*}
\begin{split}
				\|M^\top-Q\|^2
				=
				~&
				\trace\{(M^\top-Q)^\top(M^\top-Q)\}
				\\
				=
				~&
				\trace\{MM^\top+I-2MQ\}
				\\
				=
				~&
				\|M\|^2+n-2\trace\{MQ\}.
\end{split}
\end{equation*}
Therefore, the problem of finding the nearest orthogonal matrix
to a given matrix $M^\top$ is equivalent to the problem of maximizing $\trace\{MQ\}$ among all matrices $Q$ in $O(n)$.
Moreover, assuming that $\det\{M\}\neq 0$, the solution to this problem is attained at the orthogonal matrix $Q_0=(M^\top\! M)^{-1/2}M^\top$, i.e.\ $Q_0$ is the nearest orthogonal matrix to $M^\top$ and it
satisfies
\begin{equation*}
				\trace\{MQ\}
				\leq
				\trace\{MQ_0\}
				=
				\trace\{(M^\top\! M)^{1/2}\}
\end{equation*}
for every $Q\in O(n)$.
\end{remark}

Next we apply the previous lemma to obtain the following estimate.

\begin{lemma}\label{lemma-min-trace}
Let $0<\lambda\leq\Lambda<\infty$ and $\alpha\in(0,1)$. Then
\begin{multline}\label{min-trace}
				\min_{Q\in O(n)}\trace\left\{\sqmatrix{\alpha-1}{0}{0}{I}(A_1+A_2-2A_2^{1/2}QA_1^{1/2})\right\}
				\\
				\leq
				2\Big[(n-1)\Lambda-\big((1-\alpha)+n(1-\alpha)^{1/n}\big)\lambda\Big].
\end{multline}
holds for every pair of matrices $A_1,A_2\in\A(\lambda,\Lambda)$.
\end{lemma}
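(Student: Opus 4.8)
The plan is to reduce the left-hand side of \eqref{min-trace} to an explicit expression by making a good (not necessarily optimal) choice of the orthogonal matrix $Q$, and then estimate the resulting trace using \Cref{lemma-max-trace}. First I would note that, by \eqref{A=RDR}, each $A_i^{1/2}$ is symmetric and positive definite, so in particular $(A_1^{1/2})^\top = A_1^{1/2}$. Writing $M := A_2^{1/2} Q A_1^{1/2}$ will not be directly useful because $M$ depends on $Q$; instead, the cleaner route is to split the diagonal matrix $\mathrm{diag}(\alpha-1, I_{n-1})$ and handle the two blocks separately. Observe that
\begin{equation*}
				\sqmatrix{\alpha-1}{0}{0}{I_{n-1}}
				=
				I_n - (2-\alpha)\, e_1 e_1^\top,
\end{equation*}
so that for any $Q\in O(n)$,
\begin{multline*}
				\trace\left\{\sqmatrix{\alpha-1}{0}{0}{I}(A_1+A_2-2A_2^{1/2}QA_1^{1/2})\right\}
				\\
				=
				\trace\{A_1+A_2-2A_2^{1/2}QA_1^{1/2}\}
				-(2-\alpha)\,e_1^\top(A_1+A_2-2A_2^{1/2}QA_1^{1/2})e_1.
\end{multline*}

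Next I would bound the two pieces. For the first piece, by uniform ellipticity $\trace\{A_i\}\le n\Lambda$, and since we only need an upper bound for the minimum over $Q$, we are free to keep $-2\trace\{A_2^{1/2}QA_1^{1/2}\} = -2\trace\{A_1^{1/2}A_2^{1/2}Q\}$ and later choose $Q$ to our advantage. For the second (rank-one) piece, the term $-(2-\alpha)(e_1^\top A_1 e_1 + e_1^\top A_2 e_1)$ is bounded above by $-2(2-\alpha)\lambda$ since $e_1^\top A_i e_1\ge\lambda$; the remaining cross term $+2(2-\alpha)\,e_1^\top A_2^{1/2}QA_1^{1/2}e_1$ must be controlled, which is where the choice of $Q$ enters and where the first-component heuristic from \Cref{fig. mirror point} is made precise. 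The natural choice is to pick $Q$ as the optimal orthogonal coupling between the vectors $A_1^{1/2}e_1$ and $A_2^{1/2}e_1$ (a reflection/rotation aligning them), combined with \Cref{lemma-max-trace} applied to the full $n\times n$ trace $\trace\{A_1^{1/2}A_2^{1/2}Q\}\le n|\det\{A_1^{1/2}A_2^{1/2}\}|^{1/n}$; since the determinants of $A_1$ and $A_2$ are equal (constant determinant hypothesis) and at least $\lambda^n$ is not quite what we want — rather $\det A_i \ge \lambda^n$ gives $|\det\{A_1^{1/2}A_2^{1/2}\}|^{1/n}\ge\lambda$, and this lower bound is what produces the $-2n(1-\alpha)^{1/n}\lambda$-type term after the rank-one correction is incorporated. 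The bookkeeping is to arrange the split so that the $\mathrm{diag}(\alpha-1,I)$-weighted trace inherits the factor $(1-\alpha)^{1/n}$ on the determinant lower bound.

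Concretely, I expect the cleanest execution is: apply \Cref{lemma-max-trace} directly with $M = \mathrm{diag}(\alpha-1,I_{n-1})^{1/2}$ absorbed appropriately — but since $\alpha-1<0$ that matrix is not real, so instead one uses that $\mathrm{diag}(\alpha-1,I)$ is congruent to a positive form only after the rank-one split, and applies \Cref{lemma-max-trace} to the positive part. That is, choose $Q_0$ maximizing $Q\mapsto \trace\{\sqmatrix{\alpha-1}{0}{0}{I}A_2^{1/2}QA_1^{1/2}\}$ (it exists by compactness, as already noted in the text), so that the minimum in \eqref{min-trace} equals $\trace\{\sqmatrix{\alpha-1}{0}{0}{I}(A_1+A_2)\} - 2\trace\{\sqmatrix{\alpha-1}{0}{0}{I}A_2^{1/2}Q_0A_1^{1/2}\}$. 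For the last trace, writing $N := \sqmatrix{\alpha-1}{0}{0}{I}^{1/2}$ — legitimate because $\alpha - 1$ should instead be handled as $|\alpha-1|=1-\alpha>0$ after noting the weight is $\mathrm{diag}(-(1-\alpha), I_{n-1})$ and splitting off the negative part — one reduces to maximizing $\trace\{(\text{stuff})\,Q\}$ and invokes \eqref{max-trace}. The arithmetic–geometric mean inequality portion of \Cref{lemma-max-trace}, applied with the determinant $\det\{\mathrm{diag}(1-\alpha,I_{n-1})\,A_1\,A_2\}^{1/n} = (1-\alpha)^{1/n}(\det A_1\det A_2)^{1/n}\ge(1-\alpha)^{1/n}\lambda^2$... — here one takes square roots appropriately since $A_i^{1/2}$ appear — delivers precisely the term $n(1-\alpha)^{1/n}\lambda$. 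Combining the $(n-1)\Lambda$ upper bound for the "$I_{n-1}$-block diagonal part," the $-(1-\alpha)\lambda$ from the first-component diagonal part, and the $-n(1-\alpha)^{1/n}\lambda$ from the cross term via \Cref{lemma-max-trace}, and doubling, yields \eqref{min-trace}.

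The main obstacle is the sign issue: $\mathrm{diag}(\alpha-1, I_{n-1})$ is indefinite, so \Cref{lemma-max-trace} cannot be applied to it wholesale, and one must carefully separate the $e_1$-direction (where the weight is negative) from its orthogonal complement (where it is $+1$) before optimizing over $Q$. Getting the two contributions to combine into the stated bound — in particular ensuring the $(1-\alpha)^{1/n}$ exponent lands on the right determinant factor and the $\lambda$'s (rather than $\Lambda$'s) appear in all the negative terms — is the delicate part; everything else is a direct application of uniform ellipticity, the constant-determinant hypothesis, and \Cref{lemma-max-trace}.
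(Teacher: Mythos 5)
Your final bookkeeping --- $(n-1)\Lambda$ and $-(1-\alpha)\lambda$ from the $Q$-independent part, $-n(1-\alpha)^{1/n}\lambda$ from the cross term, then doubling --- coincides with the paper's, but the route you take to the cross term has a genuine gap, rooted in a misreading of \Cref{lemma-max-trace}. That lemma is stated and proved (via the singular value decomposition) for an \emph{arbitrary} real matrix $M\in\R^{n\times n}$: it needs no symmetry or positive semidefiniteness, and the sign of $\det\{M\}$ is absorbed by the absolute value in \eqref{max-trace}. So the indefiniteness of $\sqmatrix{\alpha-1}{0}{0}{I}$, which you call ``the main obstacle,'' is no obstacle at all: after a cyclic permutation inside the trace one applies the lemma directly to the real matrix
\begin{equation*}
	M
	=
	A_1^{1/2}\sqmatrix{\alpha-1}{0}{0}{I}A_2^{1/2},
\end{equation*}
obtaining
\begin{equation*}
	\max_{Q\in O(n)}\trace\left\{\sqmatrix{\alpha-1}{0}{0}{I}A_2^{1/2}QA_1^{1/2}\right\}
	\geq
	n\left|\det\{M\}\right|^{1/n}
	=
	n(1-\alpha)^{1/n}\big(\det\{A_1\}\det\{A_2\}\big)^{\frac{1}{2n}}
	\geq
	n(1-\alpha)^{1/n}\lambda,
\end{equation*}
with no square root of the weight matrix ever taken and no constant-determinant hypothesis needed (only $\det\{A_i\}\geq\lambda^n$ from uniform ellipticity). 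This, combined with the direct ellipticity bound $\trace\{\sqmatrix{\alpha-1}{0}{0}{I}(A_1+A_2)\}\leq 2[(n-1)\Lambda-(1-\alpha)\lambda]$, is exactly the paper's proof.

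By contrast, the rank-one splits you lean on do not work as written. Writing $\sqmatrix{\alpha-1}{0}{0}{I}=I-(2-\alpha)e_1e_1^\top$ (or splitting off $-(1-\alpha)e_1e_1^\top$ from $\mathrm{diag}(0,I_{n-1})$) turns the cross term into two pieces, $-2\trace\{A_2^{1/2}QA_1^{1/2}\}$ (or its $\mathrm{diag}(0,I)$-weighted analogue) plus $+2(2-\alpha)\,e_1^\top A_2^{1/2}QA_1^{1/2}e_1$, and a \emph{single} $Q$ must simultaneously make the first piece large and the second small or negative. Applying \Cref{lemma-max-trace} ``to the positive part only'' selects $Q$ for the first piece and gives no control on the second, which can be as large as $2(2-\alpha)\Lambda$ and would destroy the bound; anti-aligning $A_1^{1/2}e_1$ with $A_2^{1/2}e_1$ controls the second piece but then the trace in the first piece is no longer bounded below via the determinant. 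You never resolve this tension, and it is precisely what the wholesale application of \Cref{lemma-max-trace} to the indefinite-weighted product avoids. (Two smaller slips: the inequality $\trace\{A_1^{1/2}A_2^{1/2}Q\}\leq n|\det\{A_1^{1/2}A_2^{1/2}\}|^{1/n}$ is stated in the wrong direction, since \eqref{max-trace} is a lower bound on the maximum, not an upper bound on individual values; and the constant-determinant hypothesis is irrelevant to this lemma.)
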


\begin{proof}
First notice that the first two terms in the trace of \eqref{min-trace} do not depend on $Q$, so we can bound them directly by recalling the uniform ellipticity of the matrices in $\A(\lambda,\Lambda)$, i.e.,
\begin{equation*}
\begin{split}
				\trace\left\{\sqmatrix{\alpha-1}{0}{0}{I}(A_1+A_2)\right\}
				\leq
				~&
				2\max_{A\in\A(\lambda,\Lambda)}\trace\left\{\sqmatrix{\alpha-1}{0}{0}{I}A\right\}
				\\
				=
				~&
				2\Big[(n-1)\Lambda-(1-\alpha)\lambda\Big].
\end{split}
\end{equation*}
for every pair of matrices $A_1,A_2\in\A(\lambda,\Lambda)$.
For the remaining term, we recall \Cref{lemma-max-trace} with
\begin{equation*}
				M
				=
				A_1^{1/2}\sqmatrix{\alpha-1}{0}{0}{I}A_2^{1/2}.
\end{equation*}
Then,
\begin{equation*}
\begin{split}
				\max_{Q\in O(n)}\trace\left\{\sqmatrix{\alpha-1}{0}{0}{I}A_2^{1/2}QA_1^{1/2}\right\}
				\geq
				~&
				n\abs{\det\braces{A_1^{1/2}\sqmatrix{\alpha-1}{0}{0}{I}A_2^{1/2}}}^{1/n}
				\\
				=
				~&
				n(1-\alpha)^{1/n}\abs{\det\{A_1\}\det\{A_2\}}^{\frac{1}{2n}}
				\\
				\geq
				~&
				n(1-\alpha)^{1/n}\lambda,
\end{split}
\end{equation*}
where in the second inequality we have used the uniform ellipticity to get that the derminant of each matrix is bounded from below by $\lambda^n$. Then \eqref{min-trace} follows.
\end{proof}

The following is the main result of this section and provides a sufficient condition for \eqref{f > int f} whenever $|x-z|>N\sqrt\lambda\,\varepsilon$, concluding the proof of \Cref{lemma f > int f} in this case.

\begin{proposition}\label{key-lemma-1}
Let $n\geq 2$, $0<\lambda\leq\Lambda<\infty$ and $\alpha\in(0,1)$ such that
\begin{equation}\label{distortion-bound-alpha}
				1
				\leq
				\frac{\Lambda}{\lambda}
				<
				\frac{n(1-\alpha)^{1/n}+(1-\alpha)}{n-1}.
\end{equation}
For $x,z\in B_r$ such that $|x-z|>N\sqrt\lambda\,\varepsilon$, there exists a coupling matrix $Q_0\in O(n)$ and a large enough constant $C>0$ such that
\begin{equation*}
				\dashint_\B f\big(x+\varepsilon A(x)^{1/2}\,y,z+\varepsilon A(z)^{1/2}Q_0\,y\big)\ dy
				<
				f(x,z)-\varepsilon^2,
\end{equation*}
where $f(x,z)=f_1(x,z)=C|x-z|^\alpha+\widetilde C|x+z|^2$.
\end{proposition}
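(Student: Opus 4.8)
The plan is to combine the averaged Taylor estimate from \Cref{expansion-lemma} with the trace bound from \Cref{lemma-min-trace} and then check that the distortion hypothesis \eqref{distortion-bound-alpha} forces the bracketed quantity on the right-hand side of \eqref{integral-taylor} to be negative, after which we absorb the lower-order constants into a large choice of $C$. Concretely, first I would invoke \Cref{expansion-lemma} with the specific orthogonal matrix $Q_0$ that realizes the minimum in \eqref{min-trace} (this is the $Q_0$ furnished right before \Cref{lemma-max-trace}, or equivalently the nearest-orthogonal-matrix construction in \Cref{optimal-orthogonal-coupling} applied to $M=A(x)^{1/2}\sqmatrix{\alpha-1}{0}{0}{I}A(z)^{1/2}$). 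This yields
\begin{equation*}
				\dashint_\B f\big(x+\varepsilon A(x)^{1/2}\,y,z+\varepsilon A(z)^{1/2}Q_0\,y\big)\ dy-f(x,z)
				\leq
				|x-z|^{\alpha-2}\bigg[\frac{C\alpha}{2}\,T_\alpha(A(x),A(z),Q_0)+16\widetilde C\Lambda r^{2-\alpha}+1\bigg]\varepsilon^2,
\end{equation*}
and by \eqref{trace-formula} together with \Cref{lemma-min-trace} we have $T_\alpha(A(x),A(z),Q_0)\leq \frac{2}{n+2}\big[(n-1)\Lambda-((1-\alpha)+n(1-\alpha)^{1/n})\lambda\big]$.

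Next I would observe that the distortion bound \eqref{distortion-bound-alpha} is exactly the statement that $(n-1)\Lambda-((1-\alpha)+n(1-\alpha)^{1/n})\lambda<0$; call this negative quantity $-\delta$ with $\delta=\delta(n,\lambda,\Lambda,\alpha)>0$. Hence $\frac{C\alpha}{2}\,T_\alpha(A(x),A(z),Q_0)\leq -\frac{C\alpha\delta}{n+2}$, which is a negative quantity growing linearly in $C$, while $16\widetilde C\Lambda r^{2-\alpha}+1$ is a fixed constant (recall $\widetilde C$ depends only on $n,r,\sup_{B_{2r}}|u|$). Therefore, choosing
\begin{equation*}
				C>\frac{(n+2)(16\widetilde C\Lambda r^{2-\alpha}+2)}{\alpha\delta}
\end{equation*}
makes the bracket strictly less than $-1$. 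Since $|x-z|\leq 2r$ and $\alpha-2<0$ give $|x-z|^{\alpha-2}\geq (2r)^{\alpha-2}$, we conclude
\begin{equation*}
				\dashint_\B f\big(x+\varepsilon A(x)^{1/2}\,y,z+\varepsilon A(z)^{1/2}Q_0\,y\big)\ dy-f(x,z)
				\leq
				-|x-z|^{\alpha-2}\varepsilon^2
				\leq
				-(2r)^{\alpha-2}\varepsilon^2,
\end{equation*}
and since $(2r)^{\alpha-2}\geq 1$ can be arranged (or else one simply keeps the factor and enlarges $C$ further), this gives the claimed strict inequality $\dashint_\B f\,dy<f(x,z)-\varepsilon^2$.

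I would finish by reconciling the conditions on $C$: the final constant $C$ must simultaneously satisfy \eqref{C1}, the inequality just displayed, and whatever \eqref{C2}, \eqref{C3}, \eqref{C4} will require; since all of these are lower bounds of the form $C>(\text{something depending only on }n,\lambda,\Lambda,r,\alpha,\sup_{B_{2r}}|u|)$, they are mutually compatible and we simply take $C$ larger than the maximum. One should also double-check that $N$ was already fixed (via \eqref{N1}, \eqref{N2}) compatibly with the eventual value of $C$; but \eqref{N2} reads $N>2^{7/2}\Lambda\sqrt{\Lambda/\lambda}\,C$, so $N$ must be chosen \emph{after} $C$, which is consistent with the logical order here since the case $|x-z|>N\sqrt\lambda\,\varepsilon$ in this section does not feed back into the choice of $C$. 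The only genuinely delicate point — and the one I expect to be the main obstacle — is making sure the existence of an orthogonal minimizer $Q_0$ is correctly tied to \Cref{lemma-min-trace}: the lemma bounds the minimum over $O(n)$, so one must explicitly name a minimizing $Q_0$ (compactness of $O(n)$ and continuity of the map \eqref{Q-map}) and note that for \emph{that} $Q_0$ the estimate \eqref{integral-taylor} applies verbatim because \Cref{expansion-lemma} holds for every fixed $Q\in O(n)$. Everything else is bookkeeping with constants.
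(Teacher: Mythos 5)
Your proposal is correct and follows essentially the same route as the paper: pick $Q_0$ minimizing \eqref{Q-map} (existence by compactness of $O(n)$), feed it into \Cref{expansion-lemma} via \eqref{trace-formula} and \Cref{lemma-min-trace}, note that \eqref{distortion-bound-alpha} makes the trace term strictly negative, and absorb the remaining constants by taking $C$ large. Your parenthetical fix for the factor $|x-z|^{\alpha-2}$ (enlarging $C$ to beat $(2r)^{2-\alpha}$ rather than assuming $2r\leq 1$) is exactly what the paper does through the extra $4r^{2-\alpha}$ term in \eqref{C2}, so there is no gap.
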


\begin{proof}
Select $Q_0\in O(n)$ minimizing \eqref{Q-map} with $A_1=A(x)$, $A_2=A(z)$. Then, recalling \Cref{expansion-lemma} together with \eqref{trace-formula} and \Cref{lemma-min-trace}, it turns out that
\begin{multline*}
				\dashint_\B f\big(x+\varepsilon A(x)^{1/2}\,y,z+\varepsilon A(z)^{1/2}Q_0\,y\big)\ dy-f(x,z)
				\\
				\leq
				|x-z|^{\alpha-2}\pare{
				-\tau C
				+16\widetilde C\Lambda r^{2-\alpha}+1 }\,\varepsilon^2,
\end{multline*}
where
\begin{equation*}
				\tau=\tau(n,\lambda,\Lambda)
				:\,=
				-\frac{\alpha}{n+2}\Big[(n-1)\Lambda-\big((1-\alpha)+n(1-\alpha)^{1/n}\big)\lambda\Big]
				>
				0
\end{equation*}
by hypothesis. Finally, since $|x-z|<2r$, choosing large enough
\begin{equation}\label{C2}
				C
				>
				\frac{1}{\tau}\,(16\widetilde C\Lambda r^{2-\alpha}+4 r^{2-\alpha}+1)
\end{equation}
we ensure that the right-hand side in the previous inequality is less than $-\varepsilon^2$ and the proof is finished.
\end{proof}

\begin{remark}
Note that, as a consequence of \Cref{key-lemma-1}, the condition \eqref{distortion-bound} in the statement of \Cref{MAIN-THM} can be replaced by \eqref{distortion-bound-alpha}. Indeed, since the right-hand side of \eqref{distortion-bound-alpha} is decreasing on $\alpha$ and
\begin{equation*}
				\frac{n(1-\alpha)^{1/n}+(1-\alpha)}{n-1}
				\xrightarrow[\ \alpha\to 0\ ]{}
				\frac{n+1}{n-1},
\end{equation*}
it turns out that the condition \eqref{distortion-bound-alpha} is weaker than \eqref{distortion-bound}. 
Therefore, this improves the bound for the distortion in \Cref{MAIN-THM} and it provides an explicit dependence between the dimension, the ellipticity constants and the H\"older exponent of the solutions.
\end{remark}

\begin{remark}
It is worth noting that in the case of continuous coefficients (that is, when $A:\Omega\to\A(\lambda,\Lambda)$ is continuous) the bound for the distortion \eqref{distortion-bound} can be dropped from the assumptions in \Cref{MAIN-THM}. This is due to the fact that under the continuity assumption the difference between $A_1=A(x)$ and $A_2=A(z)$ is small for every $x$ and $z$ close enough. In other words, we can assume that $A_1$ and $A_2$ are almost the same matrix. Then the negativity in \eqref{ineq-Q} can be deduced from the constant coefficient case $A_1=A_2$, for which \eqref{ineq-Q} holds independently of the ellipticity constants.

To see this, let us first recall that one of the key steps in the proof of the asymptotic H\"older estimate is the minimization of \eqref{Q-map} among all orthogonal matrices $Q\in O(n)$, and then to deduce the conditions under which such minimum is negative. This condition is expressed through the formula \eqref{distortion-bound-alpha} and is only relevant in the large distance case. In fact, in the medium and short distance cases we assume that $\Lambda/\lambda<3$ just for convenience, but we could adapt the proof without this assumption.
In order to replace the assumption \eqref{distortion-bound} by the continuity of $A(\cdot)$, we first focus on the constant coefficients case, that is, when both $A_1$ and $A_2$ in \eqref{Q-map} are equal to a given fixed matrix $A\in\A(\lambda,\Lambda)$. Then it is possible to show the following estimate
\begin{equation}\label{ctt A tr<0}
	\min_{Q\in O(n)}\trace\left\{\sqmatrix{\alpha-1}{0}{0}{I}(2A-2A^{1/2}QA^{1/2})\right\}
	\leq
	-4(1-\alpha)\lambda,
\end{equation}
which holds for every $A\in\A(\lambda,\Lambda)$ and $\alpha\in(0,1)$. As a direct consequence of \eqref{ctt A tr<0}, it turns out that \eqref{ineq-Q} holds in the case $A_1=A_2$ independently of $0<\lambda\leq\Lambda<\infty$ and $\alpha\in(0,1)$. Moreover, by virtue of \Cref{optimal-orthogonal-coupling}, this minimum is attained for the nearest orthogonal matrix to \begin{equation*}
	A^{1/2}\sqmatrix{\alpha-1}{0}{0}{I}A^{1/2}.
\end{equation*}

Now, returning to the general case, if $A_1,A_2\in\A(\lambda,\Lambda)$, we can rewrite \eqref{Q-map} as follows,
\begin{multline*}
	\trace\left\{\sqmatrix{\alpha-1}{0}{0}{I}(A_1+A_2-2A_2^{1/2}QA_1^{1/2})\right\}
	\\
\begin{split}
	=
	~&
	\trace\left\{\sqmatrix{\alpha-1}{0}{0}{I}\big(A_1^{1/2}+A_2^{1/2}-2A_2^{1/2}Q)(A_1^{1/2}-A_2^{1/2})\right\}
	\\
	~&
	+
	\trace\left\{\sqmatrix{\alpha-1}{0}{0}{I}(2A_2-2A_2^{1/2}QA_2^{1/2})\right\}.
\end{split}
\end{multline*}
Since $\alpha\in(0,1)$, by the uniform ellipticity of $A_1$ and $A_2$ we can estimate the first term in the right-hand side by
\begin{equation*}
	4\sqrt{n\Lambda}\,\|A_1^{1/2}-A_2^{1/2}\|.
\end{equation*}
Thus, taking the minimum at both sides of the inequality and recalling \eqref{ctt A tr<0} with $A=A_2$ we get
\begin{multline*}
	\min_{Q\in O(n)}\trace\left\{\sqmatrix{\alpha-1}{0}{0}{I}(A_1+A_2-2A_2^{1/2}QA_1^{1/2})\right\}
	\\
	\leq
	4\left[\sqrt{n\Lambda}\,\|A_1^{1/2}-A_2^{1/2}\|-(1-\alpha)\lambda\right].
\end{multline*}
In consequence, the desired negativity is obtained for small enough $\|A_1^{1/2}-A_2^{1/2}\|$. In particular, imposing the condition
\begin{equation*}
	\|A_1^{1/2}-A_2^{1/2}\|
	=
	\trace\big\{(A_1^{1/2}-A_2^{1/2})^2\big\}
	\leq
	\frac{(1-\alpha)\lambda}{2\sqrt{n\Lambda}},
\end{equation*}
we ensure that
\begin{equation*}
	\min_{Q\in O(n)}\trace\left\{\sqmatrix{\alpha-1}{0}{0}{I}(A_1+A_2-2A_2^{1/2}QA_1^{1/2})\right\}
	\leq
	-2(1-\alpha)\lambda,
\end{equation*}
so \eqref{ineq-Q} follows for every $\alpha\in(0,1)$.

Finally, in the continuous coefficients case, we can always find a small enough $r>0$ such that
\begin{equation*}
	\|A(x)^{1/2}-A(z)^{1/2}\|
	\leq
	\frac{(1-\alpha)\lambda}{2\sqrt{n\Lambda}}
\end{equation*}
for every $x,z\in B_r$.
\end{remark}

\section{ Case $|x_\eta-z_\eta|\leq N\sqrt\lambda\,\varepsilon$}\label{Sec. <Nepsilon}

Remember that for fixed $\eta>0$, we selected $x_\eta$ and $z_\eta$ so that \eqref{sup1} holds. In this section we deal with the case in which the distance between these two points is bounded by $N\sqrt\lambda\,\varepsilon$. The idea is to obtain a contradiction by using two separate arguments depending on how large the distance between $x_\eta$ and $z_\eta$ is. We call these the \emph{medium} and the \emph{short distance case}, and they happen whenever $|x_\eta-z_\eta|>\frac{1}{2}\sqrt\lambda\,\varepsilon$ and $|x_\eta-z_\eta|\leq\frac{1}{2}\sqrt\lambda\,\varepsilon$, respectively.

As we noted in \Cref{sec:keys}, by using the DPP \eqref{DPP} and the counter assumption \eqref{contra}, the inequality \eqref{f < int f} holds for $x_\eta$ and $z_\eta$ satisfying \eqref{sup1}. Hence, for the medium distance case, the contradiction will follow from \Cref{lemma f > int f} that we prove below in \Cref{Sec. medium distance}.

On the other hand, since \Cref{lemma f > int f} is stated for $|x-z|>\frac{1}{2}\sqrt\lambda\,\varepsilon$, and in order to obtain a contradiction, for the short distance case we use a slightly different coupling. We address this case in \Cref{Sec. short distance}.

In contrast to the previous section where $f\equiv f_1$, in this case the key term in our comparison function is the step annular function $f_2$. Hence, in what follows, it will be enough to use the following rough estimate for the term $f_1$ in the comparison function.

\begin{lemma}\label{lemma-estimate-f1}
Let $f_1$ be the function defined in \eqref{f1} and $x,z\in B_r$. Then
\begin{equation}\label{estimate-f1}
				f_1(x+h_x,z+h_z)
				\leq
				f_1(x,z)+3C\Lambda^{\alpha/2}\varepsilon^\alpha
\end{equation}
for every $|h_x|,|h_z|<\sqrt\Lambda\,\varepsilon$.
\end{lemma}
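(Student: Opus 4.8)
The plan is to estimate each term of $f_1(x+h_x,z+h_z)=C|(x+h_x)-(z+h_z)|^\alpha+\widetilde C|(x+h_x)+(z+h_z)|^2$ separately, exploiting that $|h_x|,|h_z|<\sqrt\Lambda\,\varepsilon$ are small. The $\widetilde C$-term is the easy one: expanding $|(x+h_x)+(z+h_z)|^2=|x+z|^2+2(x+z)^\top(h_x+h_z)+|h_x+h_z|^2$, and bounding $|2(x+z)^\top(h_x+h_z)|\leq 2|x+z|\,|h_x+h_z|\leq 8r\sqrt\Lambda\,\varepsilon$ (using $x,z\in B_r$ and $|h_x+h_z|\leq 2\sqrt\Lambda\,\varepsilon$) together with $|h_x+h_z|^2\leq 4\Lambda\varepsilon^2$, one sees this contributes a term that is $O(\varepsilon)$, hence $\leq O(\varepsilon^\alpha)$ for $\varepsilon$ small since $\alpha<1$. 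The constant in front can be absorbed; note that $\widetilde C$ depends only on $r$ and $\sup_{B_{2r}}|u|$, which are allowed dependencies.

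For the $C$-term the plan is to use the elementary inequality $|a+b|^\alpha\leq|a|^\alpha+|b|^\alpha$ valid for $0<\alpha\leq 1$ (subadditivity of $t\mapsto t^\alpha$). Applying it with $a=x-z$ and $b=h_x-h_z$ gives
\[
				|(x+h_x)-(z+h_z)|^\alpha
				\leq
				|x-z|^\alpha+|h_x-h_z|^\alpha
				\leq
				|x-z|^\alpha+(2\sqrt\Lambda\,\varepsilon)^\alpha,
\]
so this term contributes at most $C\,2^\alpha\Lambda^{\alpha/2}\varepsilon^\alpha\leq 2C\Lambda^{\alpha/2}\varepsilon^\alpha$. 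Combining the two estimates, and using that for $\varepsilon$ below a threshold depending only on the stated parameters the $\widetilde C$-contribution of size $\approx(8\widetilde C r\sqrt\Lambda+4\widetilde C\Lambda\varepsilon)\varepsilon$ is dominated by $C\Lambda^{\alpha/2}\varepsilon^\alpha$, we obtain $f_1(x+h_x,z+h_z)\leq f_1(x,z)+3C\Lambda^{\alpha/2}\varepsilon^\alpha$, which is \eqref{estimate-f1}.

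The only mild subtlety — hardly an obstacle — is bookkeeping the constant so that the total is genuinely bounded by $3C\Lambda^{\alpha/2}\varepsilon^\alpha$ rather than some larger multiple: the $C$-term already uses up $2C\Lambda^{\alpha/2}\varepsilon^\alpha$, leaving room $C\Lambda^{\alpha/2}\varepsilon^\alpha$ for the $\widetilde C$-term, which is fine since $\widetilde C$ is a fixed constant independent of $C$ and $C$ may be taken large. Alternatively, since this lemma is only used as a rough estimate in the regime $|x-z|\leq N\sqrt\lambda\,\varepsilon$, one can be generous: even a bound of the form $f_1(x,z)+Cc(n,\lambda,\Lambda,r)\varepsilon^\alpha$ would suffice downstream, so the precise numerical constant $3$ is not essential and the argument is robust. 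No deep idea is needed here; the lemma is purely a convenient packaging of the subadditivity of $t^\alpha$ and the smallness of the increments.
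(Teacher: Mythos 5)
Your proof is correct and follows essentially the same route as the paper: split $f_1$ into its two terms, bound the increment of $|x-z|^\alpha$ by $|h_x-h_z|^\alpha\leq 2\Lambda^{\alpha/2}\varepsilon^\alpha$ via concavity/subadditivity of $t\mapsto t^\alpha$, and expand the square in the $\widetilde C$-term exactly as you do. The only difference is bookkeeping: rather than invoking a smallness threshold on $\varepsilon$, the paper absorbs the quadratic contribution using the standing assumption $\sqrt\Lambda\,\varepsilon<\min\{1,r\}$ (so that $8r\sqrt\Lambda\,\varepsilon+4\Lambda\varepsilon^2\leq 12r\Lambda^{\alpha/2}\varepsilon^\alpha$) together with the choice $C>12\widetilde C r$ in \eqref{C3}, which gives the stated constant $3$ directly.
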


\begin{proof}
First we use the concavity of $t\mapsto t^\alpha$ to estimate
\begin{equation*}
				|(x+h_x)-(z+h_z)|^\alpha-|x-z|^\alpha
				\leq
				|h_x-h_z|^\alpha
				\leq
				2(\sqrt\Lambda\,\varepsilon)^\alpha
\end{equation*}
for every $|h_x|,|h_z|<\sqrt\Lambda\,\varepsilon$, where $\alpha\in(0,1)$ has been used here.
On the other hand, since $x,z\in B_r$, then
\begin{equation*}
\begin{split}
				|(x+h_x)+(z+h_z)|^2-|x+z|^2
				=
				~&
				2(x+z)^\top(h_x+h_z)+|h_x+h_z|^2
				\\
				\leq
				~&
				8r\sqrt\Lambda\,\varepsilon+4\Lambda\,\varepsilon^2
				\\
				\leq
				~&
				12r\sqrt\Lambda\,\varepsilon
				\\
				\leq
				~&
				12r\Lambda^{\alpha/2}\varepsilon^\alpha,
\end{split}
\end{equation*}
where we have recalled that $\sqrt\Lambda\,\varepsilon<\min\{1,r\}$ and $0<\alpha<1$. Thus, recalling \eqref{f1} and choosing large enough
\begin{equation}\label{C3}
				C>12\widetilde Cr,
\end{equation}
we get \eqref{estimate-f1}.
\end{proof}

Before moving into details, observe that, since $n\geq 2$, in this section we can weaken the assumption \eqref{distortion-bound} by imposing the following bounds for the distortion,
\begin{equation}\label{distortion-1-3}
				1\leq\frac{\Lambda}{\lambda}\leq 3,
\end{equation}
with no dependence on the dimension. Hence, recalling \eqref{inclusions}, the inclusions
\begin{equation}\label{inclusions-1-3}
				\B
				\subset
				\frac{1}{\sqrt\lambda}\,A(x)^{1/2}\B
				\subset
				\sqrt3\,\B
\end{equation}
hold uniformly for every $x\in\Omega$.

\subsection{The medium distance case: $\frac{1}{2}\,\sqrt\lambda\,\varepsilon<|x_\eta-z_\eta|
				\leq
				 N\sqrt\lambda\,\varepsilon$}\label{Sec. medium distance}
In this section, we prove  \Cref{lemma f > int f} in the case  $\frac{1}{2}\,\sqrt\lambda\,\varepsilon<|x-z|
				\leq
				 N\sqrt\lambda\,\varepsilon$.

Recalling definition of $f_2$ in \eqref{f2}, this function can be expressed as a finite disjoint sum
\begin{equation}\label{f2-sum}
				f_2(x,z)
				=
				\sum_{i=0}^{2N}C^{2(2N-i)}\varepsilon^\alpha \chi_{S_i}(x,z),
\end{equation}
where
\begin{equation}\label{S_i}
				S_i
				:\,=
				\Big\{(x,z)\in\R^{2n} \, : \, \frac{i-1}{2}<\frac{|x-z|}{\sqrt\lambda\,\varepsilon}\leq \frac{i}{2}\Big\}
\end{equation}
for each $i=0,1,\ldots,2N$ and $\chi_{S_i}(x,z)$ stands for the function which is equal to $1$ whenever $(x,z)\in S_i$ and $0$ otherwise. 
 By the assumptions of this section, we can fix $j=2,3,\ldots,2N$ so that $(x,z)$ belongs to $S_j$. The next is the main result of this section. 
 
\begin{lemma}\label{lemma-ineq-f2}
Let $f_2$ be the function defined in \eqref{f2} and $x,z\in B_r$ such that $(x-z)/|x-z|=e_1$ and
\begin{equation*}
				\frac{1}{2}\,\sqrt\lambda\,\varepsilon
				<
				|x-z|
				\leq
				N\sqrt\lambda\,\varepsilon.
\end{equation*}
Define the vectors $\nu_x=A(x)^{-1/2}e_1$ and $\nu_z=A(z)^{-1/2}e_1$ and fix an orthogonal matrix $Q\in O(n)$ such that
\begin{equation}\label{Qvxvz}
				Q\,\frac{\nu_x}{|\nu_x|}
				=
				-\frac{\nu_z}{|\nu_z|}.
\end{equation}
Then
\begin{equation}\label{ineq-f2}
				\dashint_\B f_2\big(x+\varepsilon A(x)^{1/2}\,y,z+\varepsilon A(z)^{1/2}Q\,y\big)\ dy
				\geq
				\gamma\,C^2 f_2(x,z),
\end{equation}
where $\gamma\in(0,1)$ is a fixed constant depending only on $n$.
\end{lemma}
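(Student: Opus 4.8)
The goal is to show that the average of the annular step function $f_2$, after applying the coupling $Q$ from \eqref{Qvxvz}, is at least $\gamma C^2 f_2(x,z)$ for a dimensional constant $\gamma$. Since $f_2(x,z)=C^{2(2N-j)}\varepsilon^\alpha$ on $S_j$ and the values of $f_2$ jump by a factor $C^2$ between consecutive annuli, it suffices to produce a set of $y\in\B$ of measure at least $\gamma$ on which the coupled pair $(x+\varepsilon A(x)^{1/2}y,\,z+\varepsilon A(z)^{1/2}Qy)$ lies in an annulus $S_i$ with $i\leq j-1$, so that its contribution to the average is at least $C^{2(2N-j+1)}\varepsilon^\alpha=C^2 f_2(x,z)$. (One must also check that the coupled pair never escapes the support of $f_2$ in a way that would make the average vanish — but since $f_2\geq 0$ everywhere, only the lower bound matters, so I can simply discard the other values of $y$.)

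First I would compute the squared distance between the two coupled points. Writing $d=|x-z|$ and using $(x-z)/d=e_1$, one has
\begin{equation*}
\big|(x+\varepsilon A(x)^{1/2}y)-(z+\varepsilon A(z)^{1/2}Qy)\big|^2
=
d^2 + 2\varepsilon d\, e_1^\top\!\big(A(x)^{1/2}-A(z)^{1/2}Q\big)y + \varepsilon^2\big|(A(x)^{1/2}-A(z)^{1/2}Q)y\big|^2.
\end{equation*}
The key observation is that the linear (middle) term can be made strictly negative on a set of $y$ of definite measure. Indeed, $e_1^\top A(x)^{1/2}y = (A(x)^{1/2}e_1)^\top y = \nu_x^\top y$ after noting $A(x)^{1/2}$ is symmetric — wait, more carefully, $e_1^\top A(x)^{1/2} = (A(x)^{1/2}e_1)^\top$, and by the definition $\nu_x=A(x)^{-1/2}e_1$ one rewrites $A(x)^{1/2}e_1$ in terms of $A(x)$ and $\nu_x$; similarly $e_1^\top A(z)^{1/2}Q y = (A(z)^{1/2}e_1)^\top Q y$. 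The choice \eqref{Qvxvz} aligns $Q\nu_x$ antipodally with $\nu_z$, which is precisely what forces the two directional derivatives to reinforce each other rather than cancel. So on the half-ball $\{y\in\B : \nu_x^\top y > 0\}$ (equivalently, on a suitable spherical cap, which has measure bounded below by a dimensional constant, using the inclusions \eqref{inclusions-1-3} to keep $|\nu_x|,|\nu_z|$ comparable to $1/\sqrt\lambda$ times an absolute constant), the middle term is $\leq -c_1 \varepsilon d$ for an explicit $c_1=c_1(n,\lambda,\Lambda)>0$, while the quadratic term is $\leq c_2\varepsilon^2$ with $c_2=c_2(n,\Lambda)$. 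Since $d>\tfrac12\sqrt\lambda\,\varepsilon$ in the medium-distance regime, for $\varepsilon$ small (or rather, since the estimate is scale-invariant in $\varepsilon$, always) the quadratic term is dominated and the squared distance drops by at least $\varepsilon d \sqrt\lambda$ times a constant, i.e. the new distance is smaller than $d$ by at least a fixed fraction of $\sqrt\lambda\,\varepsilon$. One then checks this fixed decrease is enough to move from the annulus $S_j$ into $S_{j-1}$ (or lower) — here the width $\tfrac12\sqrt\lambda\,\varepsilon$ of each annulus in \eqref{S_i} is exactly the relevant scale, and one may need to shrink the admissible cap slightly to guarantee the jump of a full annulus; a short computation with $\sqrt{d^2 - c\sqrt\lambda\,\varepsilon\,d}$ versus the annulus boundaries $\tfrac{i}{2}\sqrt\lambda\,\varepsilon$ does this.

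Assembling: on a set of $y$ of measure $\geq\gamma$ (with $\gamma=\gamma(n)$, after absorbing the $\lambda,\Lambda$-dependence into the requirement that the cap angle be bounded below, which it is since $\Lambda/\lambda\leq 3$ by \eqref{distortion-1-3}) the coupled pair lies in $\bigcup_{i\leq j-1}S_i$, hence $f_2$ of the coupled pair is $\geq C^{2(2N-(j-1))}\varepsilon^\alpha = C^2 f_2(x,z)$. Integrating, $\dashint_\B f_2 \geq \gamma\, C^2 f_2(x,z)$, which is \eqref{ineq-f2}. The main obstacle is the geometric step: verifying that the antipodal alignment \eqref{Qvxvz} really does make the linear term negative on a cap of dimensionally-bounded measure, and that the resulting distance decrease is uniformly at least a full annulus width, uniformly over all $A(x),A(z)\in\A(\lambda,\Lambda)$ with $\Lambda/\lambda\leq 3$. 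This requires tracking the dependence of $|\nu_x|$, $|\nu_z|$ and the inner products $\nu_x^\top y$, $\nu_z^\top Q y$ on the eigenstructure of the matrices — the distortion bound \eqref{distortion-1-3} is what keeps all these quantities comparable and prevents the cap from degenerating.
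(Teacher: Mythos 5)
Your reduction is the same as the paper's: since $f_2\geq 0$, it suffices to exhibit a set of $y\in\B$ of measure bounded below by a constant on which the coupled pair lands in $\bigcup_{i\leq j-1}S_i$, so that its $f_2$-value is at least $C^{2(2N-j+1)}\varepsilon^\alpha=C^2f_2(x,z)$. But the geometric core — the only nontrivial part of the lemma — is left as a sketch, and as stated it contains a genuine error and skips the step where the constants actually matter. First, your proposed good set $\{y:\nu_x^\top y>0\}$ is wrong: the sign of the linear term $2\varepsilon d\,e_1^\top(A(x)^{1/2}-A(z)^{1/2}Q)y$ is governed by the vector $A(x)^{1/2}e_1-Q^\top A(z)^{1/2}e_1$, not by $\nu_x$, and already in the model case $A(x)=A(z)=I$ (so $\nu_x=\nu_z=e_1$, $Qe_1=-e_1$) that term equals $4\varepsilon d\,y_1>0$ on your half-ball, i.e.\ the points move \emph{apart} there. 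The correct good directions are near $-\nu_x$: for $y$ close to $-\tfrac13\sqrt\lambda\,\nu_x$ the coupling \eqref{Qvxvz} gives $\varepsilon A(x)^{1/2}y\approx-\tfrac13\sqrt\lambda\,\varepsilon e_1$ and $\varepsilon A(z)^{1/2}Qy\approx+\tfrac13\sqrt\lambda\,\tfrac{|\nu_x|}{|\nu_z|}\varepsilon e_1$, so the displacement of the difference is \emph{exactly} along $-e_1$ with length $\tfrac13\big(1+\tfrac{|\nu_x|}{|\nu_z|}\big)\sqrt\lambda\,\varepsilon$; this is how the paper argues (after the change of variables $w=\lambda^{-1/2}A(x)^{1/2}y$, on the ball $B_{1/300}(-\tfrac13 e_1)$).

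Second, your "linear term dominates the quadratic term" framing does not work in this regime, and you cannot appeal to small $\varepsilon$ (as you note, everything is homogeneous in $\varepsilon$). When $d\sim\tfrac12\sqrt\lambda\,\varepsilon$ the linear term is at most of order $\lambda\varepsilon^2$ while the worst-case quadratic term is up to $4\Lambda\varepsilon^2\leq12\lambda\varepsilon^2$, so domination fails at the level of generic constants; the argument must use that at the chosen $y$ the whole displacement is along $-e_1$ and of calibrated size. Moreover, you never verify the two quantitative facts the lemma really rests on: the pull must exceed one annulus half-width, $\tfrac12\sqrt\lambda\,\varepsilon$, \emph{and} it must stay below $\sqrt\lambda\,\varepsilon$, otherwise for $j=2,3$ the pair overshoots past $z$ and lands at distance larger than $\tfrac{j-1}{2}\sqrt\lambda\,\varepsilon$, contributing only $C^{2(2N-i)}\varepsilon^\alpha$ with $i\geq j$, which is too small by a factor of at least $C^2$ — your parenthetical about $f_2\geq0$ does not cover this, and "maximizing" the negative linear term on a large cap makes it worse. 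The paper's calibration is precisely that $\tfrac13\big(1+\tfrac{|\nu_x|}{|\nu_z|}\big)\in\big(\tfrac12+\tfrac1{100},\,1\big)$ thanks to $\tfrac1{\sqrt3}\leq\tfrac{|\nu_x|}{|\nu_z|}\leq\sqrt3$ from \eqref{distortion-1-3}, followed by the two-case (undershoot/overshoot) check and the choice $\varrho=1/300$, which is what produces $\gamma=3^{-n/2}\varrho^n$. Without identifying the correct direction, the explicit factor $\tfrac13$, and the overshoot bound, the proposal does not yet prove \eqref{ineq-f2}.
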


\begin{proof}
For the sake of simplicity, let us write $A_1=A(x)$ and $A_2=A(z)$. Similarly as in \eqref{f2-sum}, given $A_1,A_2\in\A(\lambda,\Lambda)$ and $Q\in O(n)$ we write
\begin{equation*}
				f_2\big(x+\varepsilon A_1^{1/2}\,y,z+\varepsilon A_2^{1/2}Q\,y\big)
				=
				\sum_{i=0}^{2N}C^{2(2N-i)}\varepsilon^\alpha \chi_{S_i^*}(y),
\end{equation*}
where
\begin{equation*}
				S_i^*
				:\,=
				\big\{y\in\B \, : \, \big(x+\varepsilon A_1^{1/2}\,y,z+\varepsilon A_2^{1/2}Q\,y\big)\in S_i\big\},
\end{equation*}
for $i=0,1,\ldots,2N$.

Therefore, computing the averaged integral we get that
\begin{multline*}
				\dashint_\B f_2\big(x+\varepsilon A_1^{1/2}\,y,z+\varepsilon A_2^{1/2}Q\,y\big)\ dy
				\\
				=
				\frac{1}{|\B|}\sum_{i=0}^{2N}C^{2(2N-i)}\varepsilon^\alpha |S_i^*|
				\geq
				\frac{1}{|\B|}\sum_{i=0}^{j-1}C^{2(2N-i)}\varepsilon^\alpha |S_i^*|
				\\
				\geq
				C^{2(2N-j+1)}\varepsilon^\alpha\,\frac{1}{|\B|}\sum_{i=0}^{j-1} |S_i^*|
				=
				C^2f_2(x,z)\,\frac{1}{|\B|}\sum_{i=0}^{j-1} |S_i^*|,
\end{multline*}
where in the first inequality we have used that $f_2\geq 0$ to discard all terms in the sum with $i\geq j$ and in the second inequality we have used that $C^{-2i}\geq C^{-2(j-1)}$ for $i=0,1,\ldots,j-1$. Therefore, it only remains to show that the sum in the right-hand side of the previous estimate has a lower bound depending only on $n$, $\lambda$ and $\Lambda$. Recalling the definition of the sets $S_i^*$ together with the fact that they are pairwise disjoint we obtain
\begin{multline*}
				\frac{1}{|\B|}\sum_{i=0}^{j-1}\abs{S_i^*}
				=
				\frac{1}{|\B|}\abs{\Big\{y\in\B \, : \, \big|x-z+\varepsilon\big(A_1^{1/2}-A_2^{1/2}Q\big)y\big|\leq\frac{j-1}{2}\sqrt\lambda\,\varepsilon\Big\}}
				\\
				=
				\frac{1}{|\B|}\abs{\Big\{y\in\B \, : \, \abs{\frac{|x-z|}{\sqrt\lambda\,\varepsilon}\,e_1+ \big(I-A_2^{1/2}QA_1^{-1/2}\big)\frac{1}{\sqrt\lambda}\,A_1^{1/2}y}\leq\frac{j-1}{2}\Big\}},
\end{multline*}
where the hypothesis $(x-z)/|x-z|=e_1$ has been used here. Note that, letting $w=\lambda^{-1/2}A_1^{1/2}y$ and recalling \eqref{inclusions-1-3} we obtain that
\begin{multline*}
				\frac{1}{|\B|}\sum_{i=0}^{j-1}\abs{S_i^*}
				\\
\begin{split}
				=
				~&
				\frac{1}{|\lambda^{-1/2}A_1^{1/2}\B|}\abs{\Big\{w\in\lambda^{-1/2}A_1^{1/2}\B \, : \, \abs{\frac{|x-z|}{\sqrt\lambda\,\varepsilon}\,e_1+ \big(I-A_2^{1/2}QA_1^{-1/2}\big)w}\leq\frac{j-1}{2}\Big\}},
				\\
				\geq
				~&
				3^{-n/2}\frac{1}{\abs{\B}}\,\abs{\Big\{w\in\B \, : \, \abs{\frac{|x-z|}{\sqrt\lambda\,\varepsilon}\,e_1+ \big(I-A_2^{1/2}QA_1^{-1/2}\big)w}\leq\frac{j-1}{2}\Big\}}.
\end{split}
\end{multline*}
Next, we claim that there exists small enough fixed constant $\varrho>0$ so that the following inequality holds
\begin{equation}\label{aim-f2}
				\abs{\frac{|x-z|}{\sqrt\lambda\,\varepsilon}\,e_1+ \big(I-A_2^{1/2}QA_1^{-1/2}\big)w}
				\leq
				\frac{j-1}{2}
				\quad \mbox{for every }
				w\in B_\varrho(-\tfrac{1}{3}\, e_1).
\end{equation}
Indeed, assuming \eqref{aim-f2} we have that
\begin{equation*}
				\frac{1}{\abs{\B}}\,\abs{\Big\{w\in\B \, : \, \abs{\frac{|x-z|}{\sqrt\lambda\,\varepsilon}\,e_1+ \big(I-A_2^{1/2}QA_1^{-1/2}\big)w}\leq\frac{j-1}{2}\Big\}}
				\geq
				\varrho^n
\end{equation*}
and \eqref{ineq-f2} follows from this and the previous estimates with $\gamma=3^{-n/2}\varrho^n$.

To prove that there exists $\varrho>0$ such that \eqref{aim-f2} holds, let $Q$ be an orthogonal matrix satisfying \eqref{Qvxvz}, then
\begin{equation*}
				A_2^{1/2}QA_1^{-1/2}e_1
				=
				-\frac{|\nu_x|}{|\nu_z|}\,e_1.
\end{equation*}
Observe that by \eqref{distortion-1-3} and the definition of $\nu_x$ and $\nu_z$ we have in particular that
\begin{equation}\label{distortion-nu}
				\frac{1}{\sqrt3}
				\leq
				\frac{|\nu_x|}{|\nu_z|}
				\leq
				\sqrt3.
\end{equation}
Now fix any $w=-\frac{1}{3}\,e_1+\varrho\zeta$ with $\zeta\in\B$ and insert it in the left-hand side of \eqref{aim-f2} to get
\begin{equation*}
				\abs{\frac{|x-z|}{\sqrt\lambda\,\varepsilon}\,e_1+ \big(I-A_2^{1/2}QA_1^{-1/2}\big)w}
				\leq
				\abs{\frac{|x-z|}{\sqrt\lambda\,\varepsilon}-\frac{1}{3}\pare{1+\frac{|\nu_x|}{|\nu_z|}}}+3\varrho,
\end{equation*}
where the second term in the right-hand side of this inequality follows from the rough estimate
\begin{equation*}
				\abs{\zeta-A_2^{1/2}QA_1^{-1/2}\zeta}
				\leq 1+\sqrt3
				<
				3,
\end{equation*}
which holds uniformly for every $A_1,A_2\in\A(\lambda,3\lambda)$ and $\zeta\in\B$.
Next, recalling that $(x,z)\in S_j$ with $j\geq 2$, we use the condition in the definition of the set $S_j$ in \eqref{S_i} together with \eqref{distortion-nu} to see that
\begin{enumerate}[label=\ (\roman*),align=left,leftmargin=0pt]\setlength\itemsep{3pt}
\item if $\displaystyle\frac{|x-z|}{\sqrt\lambda\,\varepsilon}>\frac{1}{3}\pare{1+\frac{|\nu_x|}{|\nu_z|}}$, then
\begin{equation*}
\begin{split}
				\abs{\frac{|x-z|}{\sqrt\lambda\,\varepsilon}-\frac{1}{3}\pare{1+\frac{|\nu_x|}{|\nu_z|}}}-\frac{j-1}{2}
				\leq
				~&
				\frac{j}{2}-\frac{1}{3}\pare{1+\frac{|\nu_x|}{|\nu_z|}}-\frac{j-1}{2}
				\\
				=
				~&
				\frac{1}{6}-\frac{1}{3}\cdot\frac{|\nu_x|}{|\nu_z|}
				\\
				\leq
				~&
				\frac{1}{6}-\frac{1}{3\sqrt3}
				<
				-\frac{1}{100},
\end{split}
\end{equation*}
\item otherwise, if $\displaystyle\frac{|x-z|}{\sqrt\lambda\,\varepsilon}\leq\frac{1}{3}\pare{1+\frac{|\nu_x|}{|\nu_z|}}$, then
\begin{equation*}
				\abs{\frac{|x-z|}{\sqrt\lambda\,\varepsilon}-\frac{1}{3}\pare{1+\frac{|\nu_x|}{|\nu_z|}}}-\frac{j-1}{2}
				<
				\frac{1}{3}\pare{1+\frac{|\nu_x|}{|\nu_z|}}-\frac{j-1}{2}-\frac{j-1}{2}
				\leq
				\frac{4}{3}+\frac{1}{\sqrt3}-j.
\end{equation*}
Observe that the last term is strictly less than $2-j$, so the negativity of this term is ensured since $j\geq 2$.
\end{enumerate}
Therefore, we have shown that if $(x,z)\in S_j$ for some $j\geq 2$, then 
\begin{equation*}
				\abs{\frac{|x-z|}{\sqrt\lambda\,\varepsilon}-\frac{1}{3}\pare{1+\frac{|\nu_x|}{|\nu_z|}}}
				<
				\frac{j-1}{2}-\frac{1}{100}.
\end{equation*}
Finally, choosing $\varrho=\frac{1}{300}$, the inequality \eqref{aim-f2} holds for every $w\in B_{1/300}(-\frac{1}{3}e_1)$. This completes the proof of the lemma.
\end{proof}

Hence, since $f=f_1-f_2$, combining \Cref{lemma-estimate-f1,lemma-ineq-f2} we get
\begin{multline*}
				\dashint_\B f\big(x+\varepsilon A(x)^{1/2}\,y,z+\varepsilon A(z)^{1/2}Q\,y\big)\ dy -f(x,z)
				\\
				\leq
				3C\Lambda^{\alpha/2}\varepsilon^\alpha-(\gamma\,C^2-1) f_2(x,z).
\end{multline*}
Now, by definition of $f_2$, it turns out that $f_2(x,z)\geq \varepsilon^\alpha$ for every $|x-z|\leq N\sqrt\lambda\,\varepsilon$, and thus choosing large enough $C$ such that
\begin{equation}\label{C4}
				\gamma\, C^2-3C\Lambda^{\alpha/2}-2
				>
				0,
\end{equation}
we ensure that the right-hand side of the previous inequality is less than $-\varepsilon^\alpha<-\varepsilon^2$. Then \eqref{f > int f} follows, so the proof of \Cref{lemma f > int f} is complete in the case $|x-z|\leq N\sqrt\lambda\,\varepsilon$.

\subsection{The short distance case: $|x_\eta-z_\eta|
				\leq
				\frac{1}{2}\,\sqrt\lambda\,\varepsilon$}\label{Sec. short distance}

In contrast with the large distance case and the medium distance case addressed in \Cref{Sec. >Nepsilon} and \Cref{Sec. medium distance}, respectively, where the contradiction was obtained by proving \Cref{lemma f > int f}, in this section we thrive for a contraction in a slightly different way. The main difference is that, in this case, we take an advantage from the full cancellation effect that happens when the two points $x_\eta$ and $z_\eta$ are close enough.

Recall that $x_\eta,z_\eta\in B_r$ satisfy \eqref{sup1}.
Observe that, since
\begin{equation*}
				B_{\sqrt\lambda\,\varepsilon}(x)
				\subset
				E_x
\end{equation*}
by uniform ellipticity, if the distance between $x_\eta$ and $z_\eta$ is small enough, then the corresponding ellipsoids $E_{x_\eta}$ and $E_{z_\eta}$ have non-empty intersection. Indeed, since
$
				0
				<
				|x_\eta-z_\eta|
				\leq
				\frac{1}{2}\,\sqrt\lambda\,\varepsilon
$
by hypothesis, it is easy to check that
\begin{equation*}
				B_{\sqrt\lambda\,\varepsilon/4}\pare{\frac{x_\eta+z_\eta}{2}}\subset E_{x_\eta}\cap E_{z_\eta},
\end{equation*}
and thus
\begin{equation}\label{intersection-ineq}
				\frac{|E_{x_\eta}\cap E_{z_\eta}|}{|E_{x_\eta}|}
				\geq
				\gamma(n,\lambda,\Lambda)
				:\,=
				\bigg(\frac{1}{4}\sqrt{\frac{\lambda}{\Lambda}}\bigg)^n.
\end{equation}
This means that, in the $2n$-dimensional process described in \Cref{Sec. Ellipsoid Process}, starting from $(x_\eta,z_\eta)$ we can choose a point $y\in\R^n$ belonging to both $E_{x_\eta}$ and $E_{z_\eta}$. This choice gives a full cancellation but we also need that the measure of the ellipsoids is the same, that is, $\det\{A(x_\eta)\}=\det\{A(z_\eta)\}$. In fact, this is the only place in the paper where such an assumption is needed.

Then we repeat the argument in \eqref{eq:key-contra} but with a different coupling taking advantage of the full cancellation where the ellipsoids overlap. Starting from \eqref{sup1} and using the DPP \eqref{DPP} for estimating the difference $u(x_\eta)-u(z_\eta)$ we get
\begin{equation}\label{short-distance}
\begin{split}
				f(x_\eta,z_\eta)-\eta
				\leq
				~&
				u(x_\eta)-u(z_\eta)-K
				=
				\dashint_{E_{x_\eta}} u(\zeta)\ d\zeta - \dashint_{E_{z_\eta}} u(\xi)\ d\xi-K
				\\
				=
				~&
				\frac{1}{|E_{x_\eta}|}\brackets{\int_{E_{x_\eta}\setminus E_{z_\eta}}u(\zeta)\ d\zeta-\int_{E_{z_\eta}\setminus E_{x_\eta}}u(\xi)\ d\xi}-K
				\\
				=
				~&
				\frac{|E_{x_\eta}\setminus E_{z_\eta}|}{|E_{x_\eta}|}\,\dashint_{E_{x_\eta}\setminus E_{z_\eta}}\dashint_{E_{z_\eta}\setminus E_{x_\eta}} \big[u(\zeta)-u(\xi)\big]\ d\xi\,d\zeta-K
				\\
				\leq
				~&
				\frac{|E_{x_\eta}\setminus E_{z_\eta}|}{|E_{x_\eta}|}\brackets{K+\dashint_{E_{x_\eta}\setminus E_{z_\eta}}\dashint_{E_{z_\eta}\setminus E_{x_\eta}} f(\zeta,\xi)\ d\xi\,d\zeta}-K
				\\
				=
				~&
				\frac{|E_{x_\eta}\setminus E_{z_\eta}|}{|E_{x_\eta}|}\,\dashint_{E_{x_\eta}\setminus E_{z_\eta}}\dashint_{E_{z_\eta}\setminus E_{x_\eta}} f(\zeta,\xi)\ d\xi\,d\zeta-\frac{|E_{x_\eta}\cap E_{z_\eta}|}{|E_{x_\eta}|}\,K
				\\
				\leq
				~&
				\dashint_{E_{x_\eta}\setminus E_{z_\eta}}\dashint_{E_{z_\eta}\setminus E_{x_\eta}} f(\zeta,\xi)\ d\xi\,d\zeta-\gamma K,
\end{split}
\end{equation}
where \eqref{ineq K B_2r} has been recalled in the second inequality and in the last inequality we have used that $|E_{x_\eta}\setminus E_{z_\eta}|<|E_{x_\eta}|$ and \eqref{intersection-ineq}. Since $f=f_1-f_2\leq f_1$ and $E_x\subset B_{\sqrt\Lambda\,\varepsilon}(x)$ by uniform ellipticity, we can estimate the integral part in the right-hand side above using the estimate for $f_1$ from \Cref{lemma-estimate-f1}:
\begin{equation*}
\begin{split}
				\dashint_{E_{x_\eta}\setminus E_{z_\eta}}\dashint_{E_{z_\eta}\setminus E_{x_\eta}} f(\zeta,\xi)\ d\xi\,d\zeta
				\leq
				~&
				\dashint_{E_{x_\eta}\setminus E_{z_\eta}}\dashint_{E_{z_\eta}\setminus E_{x_\eta}} f_1(\zeta,\xi)\ d\xi\,d\zeta
				\\
				\leq
				~&
				f_1(x_\eta,z_\eta)+3C\Lambda^{\alpha/2}\varepsilon^\alpha.
\end{split}
\end{equation*}
For the remaining term, recalling the definition of $f_2$ in \eqref{f2} together with the fact that $0<|x_\eta-z_\eta|\leq\frac{1}{2}\,\sqrt\lambda\,\varepsilon$, we have that $f_2(x_\eta,z_\eta)=C^{2(2N-1)}\varepsilon^\alpha$. On the other hand, combining this with the counter assumption \eqref{contra} we get
\begin{equation*}
				K
				>
				C^{4N}\varepsilon^\alpha
				=
				C^2f_2(x_\eta,z_\eta).
\end{equation*} 
Hence, putting all these estimates together in \eqref{short-distance} we have that
\begin{equation*}
				f(x_\eta,z_\eta)-\eta
				\leq
				f_1(x_\eta,z_\eta)+3C\Lambda^{\alpha/2}\varepsilon^\alpha-\gamma C^2f_2(x_\eta,z_\eta).
\end{equation*}
Let $C>\gamma^{-1/2}$. Since $f=f_1-f_2$ and $f_2(x_\eta,z_\eta)\geq\varepsilon^\alpha$, then
\begin{equation*}
				(\gamma C^2-1)\,\varepsilon^\alpha
				\leq
				3C\Lambda^{\alpha/2}\varepsilon^\alpha+\eta.
\end{equation*}
Finally, we obtain a contradiction with this inequality by letting $\eta=\varepsilon^\alpha$ and choosing large enough $C$ so that
\begin{equation*}
				\gamma\, C^2-3C\Lambda^{\alpha/2}-2
				>
				0.
\end{equation*}
Therefore, this implies the falseness of the counter assumption \eqref{contra}. Thus \eqref{aim} holds and the proof of \Cref{MAIN-THM} is finished.

\section{Examples}\label{Sec. Examples}

\subsection{Diagonal case}

As it has been proved in the previous sections, the inequality \eqref{distortion-bound-alpha} provides a sufficient condition for \eqref{f > int f}, from which the asymptotic H\"older regularity of solutions $u_\varepsilon$ to the DPP \eqref{DPP} follows. One of the key results for deriving this condition is \Cref{lemma-max-trace}, which states that, given a matrix $M\in\R^{n\times n}$, the following holds,
\begin{equation*}
				\max_{Q\in O(n)}\trace\{MQ\}
				=
				\trace\{(M^\top\! M)^{1/2}\}
				\geq
				n\left|\det\{M\}\right|^{1/n}.
\end{equation*}
Then the idea of the proof consisted basically in obtaining a lower estimate for the right-hand side in the previous formula (see the proof of \Cref{lemma-min-trace}). However, we could try to obtain the desired estimate for the left-hand side directly without using the inequality above. Therefore, using the equality $\displaystyle\max_{Q\in O(n)}\trace\{MQ\}=\trace\{(M^\top\!M)^{1/2}\}$ we obtain
\begin{multline*}
				\min_{Q\in O(n)}\trace\left\{\sqmatrix{\alpha-1}{0}{0}{I}(A_1+A_2-2A_2^{1/2}QA_1^{1/2})\right\}
				\\
				=
				\trace\left\{\sqmatrix{\alpha-1}{0}{0}{I}(A_1+A_2)-2\bigg[\sqmatrix{\alpha-1}{0}{0}{I}A_1\sqmatrix{\alpha-1}{0}{0}{I}A_2\bigg]^{1/2}\right\},
\end{multline*}
where $A_1,A_2\in\A(\lambda,\Lambda)$ and $\alpha\in(0,1)$.
Then a stronger sufficient condition for \eqref{f > int f} to be satisfied is that the matrix inequality
\begin{multline}\label{trace-inequality}
				\frac{1}{2}\trace\left\{\sqmatrix{\alpha-1}{0}{0}{I}(A_1+A_2)\right\}
				\\
				<
				\trace\left\{\bigg[\sqmatrix{\alpha-1}{0}{0}{I}A_1\sqmatrix{\alpha-1}{0}{0}{I}A_2\bigg]^{1/2}\right\}
\end{multline}
holds for every pair of matrices $A_1,A_2\in\A(\lambda,\Lambda)$. Inequality \eqref{trace-inequality} can be seen as some sort of reverse inequality of arithmetic
and geometric means for traces.
Therefore, it seems reasonable to expect that the assumption on the bound of the distortion of the ellipsoids \eqref{distortion-bound-alpha} can be weakened. For example, let us consider the case in which the axes of the ellipsoids are aligned with the coordinate axes. This is the case in which both $A_1$ and $A_2$ are diagonal matrices in $\A(\lambda,\Lambda)$. We denote by $\lambda_1(A_1),\lambda_2(A_1),\ldots,\lambda_n(A_1)$ and $\lambda_1(A_2),\lambda_2(A_2),\ldots,\lambda_n(A_2)$ the elements in the diagonals of of $A_1$ and $A_2$, respectively. Recall that all these elements coincide with the eigenvalues of $A_1$ and $A_2$ and thus they are bounded between $\lambda$ and $\Lambda$.

\begin{lemma}
The inequality \eqref{trace-inequality} holds for every diagonal matrices $A_1$ and $A_2$ in $\A(\lambda,\Lambda)$ if and only if
\begin{equation}\label{distortion-bound-diagonal-case}
				1
				\leq
				\frac{\Lambda}{\lambda}
				<
				\bigg(1+2\sqrt{\frac{1-\alpha}{n-1}}\bigg)^2.
\end{equation}
\end{lemma}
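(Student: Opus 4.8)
The plan is to exploit that everything in \eqref{trace-inequality} is simultaneously diagonal, so the matrix inequality collapses to a scalar inequality in the diagonal entries, which is then optimized entry by entry.

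First I would write $P=\sqmatrix{\alpha-1}{0}{0}{I}$ and observe that $P$, $A_1=\mathrm{diag}(a_1,\dots,a_n)$ and $A_2=\mathrm{diag}(b_1,\dots,b_n)$ are all diagonal, with $a_i=\lambda_i(A_1)$, $b_i=\lambda_i(A_2)\in[\lambda,\Lambda]$. Then $PA_1PA_2=\mathrm{diag}\bigl((\alpha-1)^2a_1b_1,\,a_2b_2,\dots,a_nb_n\bigr)$ is diagonal with strictly positive entries, so its principal square root is obtained entrywise, and since $\alpha\in(0,1)$,
\[
\trace\left\{\bigl[PA_1PA_2\bigr]^{1/2}\right\}=(1-\alpha)\sqrt{a_1b_1}+\sum_{i=2}^n\sqrt{a_ib_i},
\qquad
\tfrac12\trace\{P(A_1+A_2)\}=\tfrac{\alpha-1}{2}(a_1+b_1)+\tfrac12\sum_{i=2}^n(a_i+b_i).
\]
Moving everything to one side and using $a+b-2\sqrt{ab}=(\sqrt a-\sqrt b)^2$ and $a+b+2\sqrt{ab}=(\sqrt a+\sqrt b)^2$, one sees that \eqref{trace-inequality} for this pair is equivalent to
\[
\sum_{i=2}^n(\sqrt{a_i}-\sqrt{b_i})^2<(1-\alpha)(\sqrt{a_1}+\sqrt{b_1})^2 .
\]

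Next I would take the supremum of the left-hand side and the infimum of the right-hand side over $a_i,b_i\in[\lambda,\Lambda]$. Each summand on the left is maximized by taking $\{a_i,b_i\}=\{\lambda,\Lambda\}$, giving $\sum_{i=2}^n(\sqrt{a_i}-\sqrt{b_i})^2\le(n-1)(\sqrt\Lambda-\sqrt\lambda)^2$; the right-hand side is minimized by $a_1=b_1=\lambda$, giving $(1-\alpha)(\sqrt{a_1}+\sqrt{b_1})^2\ge4(1-\alpha)\lambda$. The point to emphasize is that these extremal configurations involve disjoint coordinates, hence are realized simultaneously by a single choice of $A_1,A_2\in\A(\lambda,\Lambda)$; consequently the displayed inequality holds for \emph{all} diagonal $A_1,A_2\in\A(\lambda,\Lambda)$ if and only if
\[
(n-1)(\sqrt\Lambda-\sqrt\lambda)^2<4(1-\alpha)\lambda .
\]

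Finally I would solve this for $\Lambda/\lambda$: dividing by $\lambda$ and setting $s=\sqrt{\Lambda/\lambda}\ge1$ turns it into $(n-1)(s-1)^2<4(1-\alpha)$, i.e.\ $s-1<2\sqrt{(1-\alpha)/(n-1)}$, i.e.\ $\Lambda/\lambda=s^2<\bigl(1+2\sqrt{(1-\alpha)/(n-1)}\bigr)^2$, which together with the standing $\Lambda/\lambda\ge1$ is exactly \eqref{distortion-bound-diagonal-case}. There is no serious obstacle here beyond the bookkeeping; the only two points that need a moment's care are that products of diagonal matrices remain diagonal (so the square root is literally entrywise, and the term $(\alpha-1)^2a_1b_1$ is positive), and that the worst cases of the two sides of the reduced inequality are attained on disjoint coordinates, so there is no trade-off between making the left side large and the right side small.
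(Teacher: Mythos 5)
Your proposal is correct and follows essentially the same route as the paper: reduce \eqref{trace-inequality} for diagonal matrices to the scalar inequality $\sum_{i=2}^n\big(\sqrt{a_i}-\sqrt{b_i}\big)^2<(1-\alpha)\big(\sqrt{a_1}+\sqrt{b_1}\big)^2$, bound the two sides by $(n-1)(\sqrt{\Lambda}-\sqrt{\lambda})^2$ and $4(1-\alpha)\lambda$ respectively, and note that both extremes are attained simultaneously (the paper's choice $A_1=\lambda I$, $A_2=\mathrm{diag}(\lambda,\Lambda,\ldots,\Lambda)$ is exactly your ``disjoint coordinates'' observation), which yields the equivalence with \eqref{distortion-bound-diagonal-case}.
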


\begin{proof}
Inserting $A_1=\mathrm{diag}(\lambda_1(A_1),\ldots,\lambda_n(A_1))$ and $A_2=\mathrm{diag}(\lambda_1(A_2),\ldots,\lambda_n(A_2))$ in \eqref{trace-inequality} we get
\begin{multline*}
				-(1-\alpha)\big(\lambda_1(A_1)+\lambda_1(A_2)\big)+\sum_{i=2}^n\big(\lambda_i(A_1)+\lambda_i(A_2)\big)
				\\
				<
				2(1-\alpha)\sqrt{\lambda_1(A_1)\lambda_1(A_2)\,}+2\sum_{i=2}^n\sqrt{\lambda_i(A_1)\lambda_i(A_2)\,}.
\end{multline*}
Rearranging the terms we obtain
\begin{equation}\label{diagonal-ineq}
				\sum_{i=2}^n\big(\sqrt{\lambda_i(A_1)}-\sqrt{\lambda_i(A_2)}\big)^2
				<
				(1-\alpha)\big(\sqrt{\lambda_1(A_1)}+\sqrt{\lambda_1(A_2)}\big)^2.
\end{equation}
Now observe that, by uniform ellipticity,
\begin{equation*}
				\sum_{i=2}^n\big(\sqrt{\lambda_i(A_1)}-\sqrt{\lambda_i(A_2)}\big)^2
				\leq
				(n-1)(\sqrt{\Lambda}-\sqrt{\lambda})^2
\end{equation*}
and
\begin{equation*}
				(1-\alpha)\big(\sqrt{\lambda_1(A_1)}+\sqrt{\lambda_1(A_2)}\big)^2
				\geq
				4(1-\alpha)\lambda.
\end{equation*}
Moreover, the equality is attained in both inequalities for $A_1=\lambda\,I$ and $A_2$ the diagonal matrix with entries $\lambda,\Lambda,\Lambda,\ldots,\Lambda$. Thus, \eqref{diagonal-ineq} holds for every diagonal matrices $A_1$ and $A_2$ in $\A(\lambda,\Lambda)$ if and only if
\begin{equation*}
				(n-1)(\sqrt{\Lambda}-\sqrt{\lambda})^2
				<
				4(1-\alpha)\lambda.
\end{equation*}
Hence, the bound in \eqref{distortion-bound-diagonal-case} follows.
\end{proof}

Observe that the bound in \eqref{distortion-bound-diagonal-case} is larger than in \eqref{distortion-bound-alpha}. However, in this result we do not consider the whole family of ellipsoids, 
only the ones aligned with the coordinate axes, and hence we can not say that \eqref{distortion-bound-diagonal-case} is a sufficient condition for asymptotic H\"older regularity.

\subsection{The mirror point coupling}
This work is related to the results in \cite{lindvallr86} except that our setting is discrete, we do not require continuity for the ellipsoids, and our proof is written in purely analytic form i.e.\ we avoid the probabilistic tools. The Lindvall--Rogers coupling (see eq.\ (2) in their paper), can be written without the drift as 
\begin{align*}
dX_t&=\sigma(X_t)\ dB_t,\\
dX'_t&=\sigma(X'_t)H(X_t, X'_t)\ dB_t,
\end{align*}
where $B$ is the Brownian motion.
If we formally discretize this, we get
\begin{align*}
X_{k+1} &= X_k + \sigma(X_k)U_k,\\
X'_{k+1} &=X'_{k+1}+\sigma(X'_k)H(X_k ,X'_k) U_k,
\end{align*}
where $U_1,U_2,U_3,\ldots$ is an i.i.d. sequence of uniform random variables on the ball $B_ \eps$. Above, $H(X_k, X'_k)$ is an orthogonal reflection matrix that can depend on the two coupled walks. Observe that our discrete results imply similar regularity results for the limit. Furthermore, the discrete results can also be interesting from the point of view of discretization of PDEs.

On the other hand, it is known that the so-called mirror point reflection coupling can be used to obtain Lipschitz estimates for functions satisfying the mean value property with balls. If $x$ and $z$ denote the centers of two balls, this coupling consists on reflecting the points of one of the balls into the other with respect to the hyperplane orthogonal to $z-x$ passing through $(x+z)/2$. Therefore, it is reasonable to expect that the same type of coupling should work if the balls are replaced by ellipsoids with sufficiently small distortion. In this subsection we adapt the mirror point coupling to our setting and we study under which conditions this coupling works. Moreover, we compare it with the optimal coupling arising from the minimization of \eqref{Q-map}.

Recalling \Cref{optimal-orthogonal-coupling}, the optimal coupling matrix $Q$ for which the minimum of \eqref{Q-map} is attained depends on $A_1$, $A_2$ and $\alpha$, and can be explicitly written as follows,
\begin{equation*}
	Q
	=
	\left[\left(A_1^{1/2}\sqmatrix{\alpha-1}{0}{0}{I}A_2^{1/2}\right)^2\right]^{-1/2}A_1^{1/2}\sqmatrix{\alpha-1}{0}{0}{I}A_2^{1/2}.
\end{equation*}
Observe that $Q$ is indeed an orthogonal matrix with determinant $-1$, so $Q$ can be also seen as a reflection in the Euclidean $n$-dimensional space. For this choice of $Q$, \eqref{ineq-Q} holds if the condition \eqref{distortion-bound-alpha} is satisfied.

However, it is possible to fix $Q$ without any dependence on the matrices or the exponent in such a way that \eqref{ineq-Q} still holds under certain restriction on the distortion. More precisely, this choice corresponds to the reflection matrix on the first coordinate axis, that is
\begin{equation*}
	Q
	=
	J_0
	:\,=
	\sqmatrix{-1}{0}{0}{I}.
\end{equation*}
To see this, replacing $\sqmatrix{\alpha-1}{0}{0}{1}=J_0+\alpha e_1e_1^\top$ we obtain
\begin{multline*}
	\trace\left\{\sqmatrix{\alpha-1}{0}{0}{I}(A_1+A_2-2A_2^{1/2}QA_1^{1/2})\right\}
	\\
	=
	\trace\big\{J_0\big(A_1+A_2-2A_2^{1/2}J_0A_1^{1/2})\big\}
	+
	\alpha\big|\big(A_1^{1/2}-A_2^{1/2}J_0\big)^\top e_1\big|^2.
\end{multline*}
By uniform ellipticity we can easily obtain a bound for the second term,
\begin{equation*}
	\alpha\big|\big(A_1^{1/2}-A_2^{1/2}J_0\big)^\top e_1\big|^2
	\leq
	\alpha\big(|A_1^{1/2}e_1|+|A_2^{1/2}e_1|\big)^2
	\leq
	4\alpha\Lambda,
\end{equation*}
while for the other term, observing that $J_0A_2^{1/2}J_0\in\mathcal{A}(\sqrt{\lambda},\sqrt{\Lambda})$, we get that
\begin{equation*}
\begin{split}
	\trace\big\{J_0\big(A_1+A_2-2A_2^{1/2}J_0A_1^{1/2})\big\}
	=
	~&
	\trace\big\{J_0(A_1+A_2)\big\}-2\trace\big\{J_0A_2^{1/2}J_0A_1^{1/2}\big\}
	\\
	\leq
	~&
	2\big[-\lambda+(n-1)\Lambda\big]-2n\lambda
	\\
	=
	~&
	2\big[(n-1)\Lambda-(n+1)\lambda\big].
\end{split}
\end{equation*}
Putting together these estimates we have
\begin{equation*}
	\trace\left\{\sqmatrix{\alpha-1}{0}{0}{I}(A_1+A_2-2A_2^{1/2}QA_1^{1/2})\right\}
	\leq
	2\big[(n-1+2\alpha)\Lambda-(n+1)\lambda\big],
\end{equation*}
so \eqref{ineq-Q} holds with $Q=J_0$ if
\begin{equation}\label{dist-mirror}
	\frac{\Lambda}{\lambda}
	<
	\frac{n+1}{n-1+2\alpha}.
\end{equation}

Comparing this with \eqref{distortion-bound-alpha}, we observe that \eqref{dist-mirror} improves the exponent $\alpha$ when the distortion $\Lambda/\lambda$ is close enough to $1$. However, for sufficiently large distortion (but still below $\frac{n+1}{n-1}$), \eqref{distortion-bound-alpha} gives a better exponent. In any case, both conditions give the same upper bound for the distortion when $\alpha\to 0$, that is $\Lambda/\lambda<\frac{n+1}{n-1}$ for every $\alpha\in(0,1)$.

It is worth noting that, since $Q$ was originally chosen to be the orthogonal matrix minimizing the left-hand side in \eqref{ineq-Q}, it is reasonable to expect that a refinement of the argument in \Cref{lemma-min-trace} would produce better estimates, as it was already pointed out in the discussion at the beginning of \Cref{Sec. Examples}. In this direction, in the following example we show (in the constant coefficients case, $A_1=A_2$) that, even when the distortion $\Lambda/\lambda$ is small, the optimal choice of $Q$ minimizing \eqref{Q-map} differs from the reflection $J_0$.

\begin{example}
In this example we discuss the effect of choosing the mirror point coupling in \eqref{ineq-Q} when $A_1=A_2$ is certain $2\times 2$ symmetric matrix with given distortion $\Lambda/\lambda=\delta\geq 1$. In particular, we compare the conditions under which \eqref{ineq-Q} holds when $Q$ is either $J_0$ or the optimal coupling minimizing \eqref{Q-map}.

Let $A$ be a $2\times 2$ symmetric and positive definite matrix. Then the inequality \eqref{ineq-Q} for $A_1=A_2=A$ is equivalent to
\begin{equation}\label{n=2}
	\trace\big\{A^{1/2}J_\alpha A^{1/2}(I-Q)\big\}
	<
	0,
\end{equation}
where we have denoted
\begin{equation*}
	J_\alpha
	=
	\sqmatrix{\alpha-1}{0}{0}{1}
\end{equation*}
for simplicity.
Fix $n=2$ and
\begin{equation*}
	A
	=
	\sqmatrix{\delta+1}{\delta-1}{\delta-1}{\delta+1},
\end{equation*}
with $\delta\geq 1$. Then $A$ has eigenvalues $\lambda=2$ and $\Lambda=2\delta$, so its distortion is $\Lambda/\lambda=\delta$. In this example we compare the conditions under which \eqref{n=2} holds when $Q$ is either $J_0$ or the nearest orthogonal matrix to $A^{1/2}J_\alpha A^{1/2}$, for which the minimum is attained.

First, if $Q=J_0$, then $I-J_0=\sqmatrix{2}{0}{0}{0}$ and thus
\begin{equation*}
	\trace\big\{A^{1/2}J_\alpha A^{1/2}(I-J_0)\big\}
	=
	2e_1^\top A^{1/2}J_\alpha A^{1/2}e_1.
\end{equation*}
Computing the square root of $A$ we get
\begin{equation*}
	A^{1/2}
	=
	\frac{1}{\sqrt{2}}\sqmatrix{\sqrt{\delta}+1}{\sqrt{\delta}-1}{\sqrt{\delta}-1}{\sqrt{\delta}+1},
\end{equation*}
so
\begin{equation*}
	\trace\big\{A^{1/2}J_\alpha A^{1/2}(I-J_0)\big\}
	=
	-(1-\alpha)(\sqrt{\delta}+1)^2+(\sqrt{\delta}-1)^2.
\end{equation*}
Hence, \eqref{n=2} holds with $Q=J_0$ if and only if
\begin{equation*}
	\alpha
	<
	1-\bigg(\frac{\sqrt{\delta}-1}{\sqrt{\delta}+1}\bigg)^2,
\end{equation*}
or equivalently
\begin{equation*}
	\frac{\Lambda}{\lambda}
	=
	\delta
	<
	\bigg(\frac{1+\sqrt{1-\alpha}}{1-\sqrt{1-\alpha}}\bigg)^2.
\end{equation*}

On the other hand, by the equality in \eqref{max-trace} from \Cref{lemma-max-trace} with $M=A^{1/2}J_\alpha A^{1/2}$ we have that
\begin{equation*}
	\min_{Q\in O(n)}\trace\big\{A^{1/2}J_\alpha A^{1/2}(I-Q)\big\}
	=
	\trace\big\{A^{1/2}J_\alpha A^{1/2}\big\}-\trace\big\{\big[(A^{1/2}J_\alpha A^{1/2})^2\big]^{1/2}\big\}.
\end{equation*}
It is easy to check that
\begin{equation*}
	\trace\big\{A^{1/2}J_\alpha A^{1/2}\big\}
	=
	\trace\big\{J_\alpha A\big\}
	=
	\alpha(\delta+1).
\end{equation*}
For the other term we recall that
\begin{equation*}
	\trace\{M^{1/2}\}^2
	=
	\trace\{M\}+2\sqrt{\det\{M\}}
\end{equation*}
for every positive definite symmetric $2\times 2$ matrix $M$, so
\begin{equation*}
	\trace\big\{\big[(A^{1/2}J_\alpha A^{1/2})^2\big]^{1/2}\big\}
	=
	\sqrt{\trace\big\{(J_\alpha A)^2\big\}+2\sqrt{\det\{(J_\alpha A)^2\}}},
\end{equation*}
where
\begin{equation*}
	\trace\{(J_\alpha A)^2\}
	=
	\alpha^2(\delta+1)^2+8(1-\alpha)\delta
\end{equation*}
and
\begin{equation*}
	\sqrt{\det\{(J_\alpha A)^2\}}
	=
	(1-\alpha)\det\{A\}
	=
	4(1-\alpha)\delta.
\end{equation*}
Hence, replacing above we obtain
\begin{equation*}
	\min_{Q\in O(n)}\trace\big\{A^{1/2}J_\alpha A^{1/2}(I-Q)\big\}
	=
	\alpha(\delta+1)-\sqrt{\alpha^2(\delta+1)^2+16(1-\alpha)\delta},
\end{equation*}
which is clearly always negative, independently of the relation between the distortion $\Lambda/\lambda=\delta$ and the exponent $\alpha$.

In conclusion:
\begin{equation*}
	\trace\big\{A^{1/2}J_\alpha A^{1/2}(I-J_0)\big\}
	<
	0
	\qquad\Longleftrightarrow\qquad
	1
	\leq
	\frac{\Lambda}{\lambda}
	<
	\bigg(\frac{1+\sqrt{1-\alpha}}{1-\sqrt{1-\alpha}}\bigg)^2,
\end{equation*}
and
\begin{equation*}
	\min_{Q\in O(n)}\trace\big\{A^{1/2}J_\alpha A^{1/2}(I-Q)\big\}
	<
	0
	\qquad\Longleftrightarrow\qquad
	\frac{\Lambda}{\lambda}
	\geq
	1
	\quad\text{ and }\quad
	\alpha\in(0,1).
\end{equation*}
\end{example}

\begin{example}
The mirror point coupling $Q=J_0$ fails to give \eqref{ineq-Q} when the distortion is large.
Let
\begin{equation*}
	A
	=
	\sqmatrix{5}{-12}{-12}{29},
\end{equation*}
which is a positive definite matrix and has the following square root:
\begin{equation*}
	A^{1/2}
	=
	\sqmatrix{1}{-2}{-2}{5}.
\end{equation*}
Then
\begin{equation*}
	A^{1/2}J_\alpha A^{1/2}
	=
	\sqmatrix{3}{-8}{-8}{21}+\alpha\sqmatrix{1}{-2}{-2}{4}
\end{equation*}
for every $\alpha\in(0,1)$. If we choose $Q=J_0$, then $I-Q=2e_1e_1^\top$, and thus
\begin{equation*}
	\trace\big\{A^{1/2}J_\alpha A^{1/2}(I-Q)\big\}
	=
	2e_1^\top A^{1/2}J_\alpha A^{1/2}e_1
	=
	2(3+\alpha)
	>
	0
\end{equation*}
for every $\alpha\in(0,1)$, so \eqref{n=2} (and thus \eqref{ineq-Q}) does not hold in general for $Q=J_0$.
On the other hand, for $Q$ the optimal coupling, \eqref{n=2} follows immediately from recalling \eqref{ctt A tr<0}.
\end{example}

\subsection{Counterexamples for large distortion}

The following examples show that the bound on the distortion $\Lambda/\lambda$ in the assumptions of our main theorem is really needed in this method. We first remark that this bound is only needed in the case $|x-z|>N\sqrt\lambda\,\varepsilon$. Indeed, when $|x-z|\leq N\sqrt\lambda\,\varepsilon$, the only place where the distortion crucially appears is in the inequality \eqref{intersection-ineq}, and for the proof in this case any bound on $\Lambda/\lambda$ is sufficient: for convenience we selected $\Lambda/\lambda\leq 3$ there. Thus, the only part of the paper where the bound on the distortion comes crucially into a play is in \Cref{Sec. >Nepsilon}, more precisely, in the condition \eqref{ineq-Q}.

As we explained in \Cref{Sec. Q}, given $A_1,A_2\in\A(\lambda,\Lambda)$ our strategy was based on showing \eqref{ineq-Q} for some $\alpha\in(0,1)$. Let us reformulate the problem by defining the ellipsoids $E_1:\,=E_{A_1}=A_1^{1/2}\B$ and $E_2:\,=E_{A_2}=A_2^{1/2}\B$ and letting $\phi(y)=A_2^{1/2}QA_1^{-1/2}y$. Then $\phi$  maps $E_1$ into $E_2$. Thus, performing a change of variables we see that \eqref{ineq-Q} is equivalent to
\begin{equation}\label{projection-ineq}
				\dashint_{E_1}|P_1(y-\phi(y))|^2\ dy
				>
				\dashint_{E_1}|(I-P_1)(y-\phi(y))|^2\ dy,
\end{equation}
where $P_ih$ stands for the projection of a vector $h\in\R^n$ over the $i$-th coordinate axis for $i=1,\ldots,n$, i.e.\ $P_ih=(e_i^\top h)e_i=(e_ie_i^\top) h$. That is, the method in \Cref{Sec. >Nepsilon} works if the length of the projection on $\{e_1\}^\bot$ of the difference between $y\in E_1$ and $\phi(y)\in E_2$ is larger (in average) than the length of such projection  over $\mathrm{span}\{e_1\}$.

Here, we can consider any measure preserving coupling map $\phi$ such that $\phi(E_1)=E_2$. This is the case in the following examples, which are constructed in such a way that the condition \eqref{projection-ineq} does not hold for any measure preserving map $\phi$. This is achieved by considering a pair of ellipsoids with large distortion oriented in such a way that the largest principal axes are aligned with coordinate axes orthogonal to $e_1$, then the difference $\phi(y)-y$ project over the first coordinate axis will necessarily be smaller (in average) than projected over one of the orthogonal axes to $e_1$. In other words, the average distance in terms of $f$ increases, no matter which coupling we use.

\subsubsection{A counterexample in two dimensions}\label{example-2D}

Let us consider an ellipse centered at $0$ with its largest axis oriented in the direction of $e_2$ and a ball,
\begin{equation*}
				E_1
				=
				\sqmatrix{1/10}{0}{0}{10}\B
				\quad \mbox{ and } \quad
				E_2
				=
				\sqmatrix{1}{0}{0}{1}\B.
\end{equation*}
In this case, both the ellipses have the same area, $|E_1|=|E_2|=\pi$, but different distortion: $E_1$ has distortion $\sqrt{100}$ while the distortion of $E_2$ is equal to $1$. Let $\phi:E_1\to E_2$ be any measure preserving map.
Let us denote $y=(y_1,y_2)^\top\in\R^2$. Since $|y_1|<\frac{1}{10}$ in $E_1$ and $|y_1|<1$ in $E_2$, it turns out that $|P_1(\phi(y)-y)|<\frac{11}{10}$ in $E_1$, and thus
\begin{equation*}
				\dashint_{E_1}|P_1(\phi(y)-y)|^2\ dy
				\leq
				1.21.
\end{equation*}
On the other hand, since $|y_2|<1$ in $E_2$, then $|P_2\phi(y)|< 1$ in $E_1$.  Moreover, since $E_1\cap\{|y_2|\geq 5\}$ is a non-empty subset of $E_1$ with positive measure, we can estimate $|P_2(\phi(y)-y)|$ in $E_1\cap\{|y_2|\geq 5\}$ from below by $4$. Indeed, by the triangle inequality,
\begin{equation*}
				|P_2(\phi(y)-y)|
				\geq
				\big||P_2\phi(y)|-|y_2|\big|
				>
				|y_2|-1
				\geq
				4
\end{equation*}
for every $y\in E_1$ such that $|y_2|\geq 5$.
Then, averaging we get
\begin{equation*}
\begin{split}
				\dashint_{E_1}|P_2(\phi(y)-y)|^2\ dy
				&
				\geq
				\frac{1}{|E_1|}\int_{E_1\cap\left\{|y_2|\geq 5\right\}}|P_2(\phi(y)-y)|^2\ dy
				\\
				&
				\geq
				16\,\frac{|E_1\cap\{\abs{y_2}\geq 5\}|}{|E_1|}.
\end{split}
\end{equation*}
Since $|E_1|=\pi$ and
\begin{equation*}
				E_1\cap\{|y_2|\geq 5\}
				=
				\sqmatrix{1/10}{0}{0}{10}\big(\B\cap\{|y_2|\geq 1/2\}\big),
\end{equation*}
then
\begin{equation*}
\begin{split}
				16\,\frac{|E_1\cap\{|y_2|\geq 5\}|}{|E_1|}
				=
				~&
				\frac{16}{\pi}\,|\B\cap\{|y_2|\geq 1/2\}|
				=
				\frac{64}{\pi}\int_{1/2}^1\sqrt{1-t^2}\ dt
				\\
				=
				~&
				16\bigg(\frac{2}{3}-\frac{\sqrt{3}}{2\pi}\bigg)
				\simeq
				6.26\ldots
\end{split}
\end{equation*}
Hence,
\begin{equation*}
				\dashint_{E_1}|P_1(\phi(y)-y)|^2\ dy
				<
				\dashint_{E_1}|P_2(\phi(y)-y)|^2\ dy,
\end{equation*}
which contradicts \eqref{projection-ineq}.

\subsubsection{A counterexample in three dimensions}\label{example-3D}
In this case we consider two ellipsoids with the same shape but with different orientations,
\begin{equation*}
				E_1
				=
				\begin{pmatrix}
				1 & 0 & 0 \\
				0 & 100 & 0 \\
				0 & 0 & 1
				\end{pmatrix}\B
				\quad \mbox{ and } \quad
				E_2
				=
				\begin{pmatrix}
				1 & 0 & 0 \\
				0 & 1 & 0 \\
				0 & 0 & 100
				\end{pmatrix}\B.
\end{equation*}
Observe that the largest axes of the ellipsoids are aligned with $e_2$ and $e_3$, respectively.
Let us denote $y=(y_1,y_2,y_3)^\top\in\R^3$. Then $|y_1|< 1$ in both $E_1$ and $E_2$, that is, for any measure preserving map $\phi:E_1\to E_2$, we have $|y_1|< 1$ and $|P_1\phi(y)|<1$, and thus
\begin{equation*}
				\dashint_{E_1}|P_1(\phi(y)-y)|^{2}\ dy
				\leq
				4.
\end{equation*} 
On the other hand, let us consider the set $E_1\cap\{|y_2|\geq 5\}$. Since $|y_2|<1$ in $E_2$, then the distance between a point in $E_2$ and $E_1\cap\{|y_2|\geq 5\}$ is bounded from below by $4$. In particular,  $|P_2(\phi(y)-y)|\geq |y_2|-|P_2\phi(y)|\geq 4$ in $E_1\cap\{|y_2|\geq 5\}$, and thus
\begin{equation*}
\begin{split}
				\dashint_{E_1} |(I-P_1)(\phi(y)-y)|^2\ dy
				&
				=
				\dashint_{E_1} |P_2(\phi(y)-y)|^2\ dy+\dashint_{E_1} |P_3(\phi(y)-y)|^2\ dy
				\\
				&
				\geq
				\dashint_{E_1} |P_2(\phi(y)-y)|^2\ dy
				\\
				&
				\geq
				\frac{1}{|E_1|}\int_{E_1\cap \{|y_2|\geq 5\}} |P_2(\phi(y)-y)|^2\ dy
				\\
				&
				\geq
				16\,\frac{|E_1 \cap  \{|y_2|\geq 5\}|}{|E_1|}
				\geq
				8.
\end{split}
\end{equation*}
The last inequality follows by the fact that the set $\{|y_2|\geq 5\}$ takes at least one half of the volume of the ellipsoid. Then
\begin{equation*}
				\dashint_{E_1}|P_1(\phi(y)-y)|^2\ dy
				<
				\dashint_{E_1}|(I-P_1)(\phi(y)-y)|^2\ dy,
\end{equation*}
which also contradicts \eqref{projection-ineq}.

\ 

\noindent \textbf{Acknowledgements.} The authors have been supported by the Academy of Finland project \#298641. \'A.~A. was partially supported by the grant MTM2017-85666-P. The authors would like to thank Eero Saksman for discussions related to this paper. \\ 

\def\cprime{$'$} \def\cprime{$'$} \def\cprime{$'$}

\end{document}